\documentclass[pdflatex,sn-mathphys-num]{sn-jnl}

\usepackage{graphicx}%
\usepackage{multirow}%
\usepackage{amsmath,amssymb,amsfonts}%
\usepackage{amsthm}%
\usepackage{mathrsfs}%
\usepackage[title]{appendix}%
\usepackage{xcolor}%
\usepackage{textcomp}%
\usepackage{manyfoot}%
\usepackage{booktabs}%
\usepackage{algorithm}%
\usepackage{algorithmicx}%
\usepackage{algpseudocode}%
\usepackage{listings}%
\usepackage{enumitem}
\usepackage{tikz}
\usetikzlibrary{arrows.meta,calc,decorations.pathreplacing,shapes.misc}

\newcommand\diam{\mathop{\rm diam}}
\newcommand\dist{\mathop{\rm dist}}

\newcommand\argmin{\mathop{\rm argmin}}

\newcommand\gra{\mathop{\rm gra}}

\newcommand\R{\mathbb{R}}
\renewcommand\O{\mathcal{O}}

\newcommand\C{\mathcal{C}}
\newcommand\X{\mathcal{X}}
\newcommand\Y{\mathcal{Y}}
\newcommand\mF{\mathcal{F}}
\newcommand\tmF{\widetilde{\mF}}
\newcommand\dx{\dot{x}}
\newcommand\dw{\dot{w}}
\newcommand\mS{\mathcal{S}}
\renewcommand\L{\mathcal{L}}

\theoremstyle{thmstyleone}%
\newtheorem{theorem}{Theorem}
\newtheorem{lemma}[theorem]{Lemma}%
\newtheorem{corollary}[theorem]{Corollary}%

\theoremstyle{thmstyletwo}%
\newtheorem{remark}{Remark}%

\theoremstyle{thmstylethree}%
\newtheorem{definition}{Definition}%

\begin{document}

\title[Convergence of the Frank-Wolfe Algorithm for Monotone Variational Inequalities]{Convergence of the Frank-Wolfe Algorithm for Monotone Variational Inequalities}

\author*[1]{\fnm{Matthew} \sur{Hough}}\email{mhough@uwaterloo.ca}

\affil*[1]{\orgdiv{Department of Combinatorics and Optimization}, \orgname{University of Waterloo}, \orgaddress{\street{200 University Ave.~W.}, \city{Waterloo}, \postcode{N2L 3G1}, \state{Ontario}, \country{Canada}}}

\abstract{We consider the Frank-Wolfe algorithm for solving variational inequalities over compact, convex sets under a monotone
  $C^1$ operator and vanishing, nonsummable step sizes. We introduce a continuous-time interpolation of the discrete
  iteration and use tools from dynamical systems theory to analyze its asymptotic behavior. This allows us to
  derive convergence results for the original discrete algorithm. Consequently, every cluster point of the iterates
  is a solution of the underlying variational inequality, the distance from the iterates to the solution set converges
  to zero, and the Frank-Wolfe gap vanishes asymptotically. In the strongly monotone case, the solution is unique and
  the iterates converge to it. In particular, this proves Hammond's conjecture on the convergence of generalized
  fictitious play. We also discuss rates of convergence and under what assumptions rates can be shown.}

\keywords{Frank-Wolfe, conditional gradient method, projection-free, variational inequality problem}


\maketitle

\section{Introduction}\label{sec:intro}
We consider the variational inequality problem
\begin{equation} \tag{VIP} \label{prob:vip}
  \text{Find $x^* \in \C$ such that } \langle F(x^*), z-x^*\rangle \geq 0,
  \quad \forall z\in\C,
\end{equation}
and study the Frank-Wolfe iteration for solving \eqref{prob:vip}
\begin{equation} \label{alg:gfp} \tag{FW}
  x_{k+1} = x_k + \gamma_{k+1}(s_k - x_k),\quad s_k \in \beta(F(x_k)),\quad x_0 \in \C,
\end{equation}
where
\[
  \beta(u) := \argmin_{s\in\C}\langle{u, s}\rangle
\]
denotes the linear minimization oracle (LMO).
In general, we assume that $\C\subseteq\R^n$ is nonempty, compact, and convex, that $F:\C\to\R^n$ is monotone and $C^1$,
and that the stepsizes satisfy $\gamma_k\in(0,1]$, $\gamma_k\to 0$, and $\sum_{k=1}^\infty \gamma_k=\infty$.
This algorithm may be viewed as a generalization of the Frank-Wolfe algorithm \cite{FrankWolfe1956}
to the setting of monotone variational inequalities. It also contains generalized fictitious play (GFP)
\cite[Section~4.3.1]{Hammond1984} as the special case where $\gamma_k = 1/k$. Brown's classical fictitious
play algorithm (FP) \cite{Brown1951} is obtained as a further specialization when $\gamma_k = 1/k$, $\C = \Delta_n \times \Delta_m$ is a product of simplices,
and
\[
  F(x,y) = (-Ay, A^\top x),
\]
with $A \in \R^{n \times m}$ the payoff matrix.
We will denote the set of solutions to \eqref{prob:vip} by $\mS$, i.e.
\[
  \mS := \{x \in \C : \langle{F(x), z - x}\rangle \geq 0,\quad\forall z \in \C\}.
\]
To measure convergence to a solution of \eqref{prob:vip}, we introduce $V : \C \to \R_+$ the \textit{Frank-Wolfe gap}, defined by
\[
  V(x) := \max_{s \in \C}\langle{F(x), x - s}\rangle.
\]
We will see that $V$ is nonnegative on $\C$, and vanishes only on $\mS$. Moreover, for the special case of \eqref{alg:gfp}
corresponding to FP, $V$ is equivalent to the gap function used to measure convergence to a Nash equilibrium $(x^*,y^*)$ in the analysis of FP.
Indeed, $V$ is nonnegative for all $(x,y) \in \Delta_n\times\Delta_m$ and zero iff $(x,y)$ is a Nash equilibrium.

\paragraph{Related work.}
In the special case of FP, convergence was proven in \cite{Robinson1951}, which was later extended in \cite{Shapiro1958} to the convergence rate $\O(k^{-1/(m+n-2)})$.
Karlin later conjectured that the faster rate $\O(k^{-1/2})$ could be obtained, but this was refuted recently in \cite{Daskalakis2014} by
the construction of an adversarial tie-breaking strategy achieving a $\Omega(k^{-1/n})$ rate of convergence where $A$ is the $n\times n$ identity matrix.
After the discovery of this counterexample, the question of whether FP could still converge at Karlin's conjectured rate under a lexicographic tie-breaking rule
remained open. In 2021, it was proven in \cite{Abernethy2021} that for diagonal $A$, the convergence rate is indeed $\O(k^{-1/2})$. Notably, this class of $A$ includes
the identity matrix used in the counterexample \cite{Daskalakis2014}. However, the very recent result of Wang \cite{Wang2025} showed that the weaker form of Karlin's conjecture,
where ties are assumed to be broken lexicographically, was indeed false. Wang constructed a $10\times 10$ matrix for which FP converges at $\Omega(k^{-1/3})$ and
no ties occur except at the first step.

Despite the rich literature for FP, no convergence results are known for \eqref{alg:gfp} without additional assumptions on $\C$, even if $F$ is taken to be strongly monotone instead of just monotone.
Hammond conjectured the following for GFP in \cite[Section~4.3.1]{Hammond1984}:
\begin{quote}
  \emph{If $F$ is strongly monotone and $\C$ is a polytope, then generalized fictitious play will solve \eqref{prob:vip}.}
\end{quote}
To the best of our knowledge, prior to this work Hammond's conjecture remained open.

Note that when $F$ is only assumed to be monotone and not strongly monotone, \eqref{alg:gfp} can converge no faster than
FP, in general, for the step size $\gamma_k = 1/k$. It is unknown whether this statement can be extended to other choices of $\gamma_k$.

Variants of \eqref{alg:gfp} have drawn interest recently in the fields of optimization and computer science due to the projection-free nature of the iterations.
In particular, \cite{Gidel2017} introduce SP-FW, which considers the case of $\C = \X \times \Y$ and $F(x,y) = \left(\nabla_x \L(x,y), -\nabla_y \L(x,y)\right)$, where $\L$ is assumed to
be smooth and convex-concave. Since the subgradient of a convex function is monotone, it follows that $F(x,y)$ in this case is monotone.
The authors are able to obtain some convergence results for specific step sizes, but under strong assumptions such as strong convexity of $\X$ and $\Y$,
or uniform strong convex-concavity of $\L$ in addition to $\X$ and $\Y$ being polytopes with a very restrictive pyramidal width\footnote{Pyramidal width was first introduced in \cite{LacosteJ2015}} bound. Note that the uniform
strong convex-concavity of $\L$ implies $F$ is strongly monotone. Another related work is \cite{Hough2024}, in which the authors consider using iterations similar
to \eqref{alg:gfp} to solve linear programs. They call their algorithm FWLP, which on each iteration performs the following
\[
  \begin{cases}
  r_{k} \in \argmin_{r \in \Delta} \langle{c - A^Ty_k,r}\rangle,\\
  x_{k+1} = x_k + \gamma_{k+1}(r_k - x_k),\\
  s_{k} \in \argmin_{s \in \Gamma} \langle{b - Ax_{k+1},s}\rangle,\\
  y_{k+1} = y_k + \gamma_{k+1}(s_k - y_k),
  \end{cases}
\]
where $\Delta,\Gamma$ are polytopes and $\gamma_k = 1/k$. Interestingly, the $s_k$ update of FWLP relies on $x_{k+1}$
instead of $x_k$. If in the $s_k$ update, $x_{k+1}$ was replaced by $x_k$, FWLP could be rewritten completely in
the framework of \eqref{alg:gfp} with monotone operator $F(x,y) = (c - A^Ty, Ax - b)$. The authors in \cite{Hough2024} were unable to prove convergence
of FWLP.

\paragraph{Contributions.}
We prove that \eqref{alg:gfp} converges asymptotically to a solution of \eqref{prob:vip} in the sense that the Frank-Wolfe gap $V(x_k) \to 0$. Moreover,
if $F$ is additionally assumed to be strongly monotone, we show that $x_k \to x^* \in \mS$. Hammond's conjecture applies to the special case
where $\gamma_{k} = 1/k$, hence our result proves Hammond's conjecture. We also provide convergence rates in cases
where $\C$ is assumed to be strongly convex.

\subsection{Preliminaries}
Since $\C$ is compact, its diameter is well-defined. We denote it by $\diam(\C) := \max_{x,y \in \C} \|x-y\|$. Throughout,
$\|\cdot\|$ denotes the Euclidean norm, and $B(c,r)$ denotes the closed ball centered at $c \in \R^n$ with radius $r > 0$.

The following are standard definitions that we will use throughout.
\begin{definition}
  An operator $F : \C \to \R^n$ is called monotone if for all $x,y \in \C$,
  \[
    \langle{F(x) - F(y), x - y}\rangle \geq 0.
  \]
\end{definition}

\begin{definition}
  An operator $F : \C \to \R^n$ is called $\mu$-strongly monotone for some $\mu > 0$ if for all $x,y \in \C$,
  \[
    \langle{F(x) - F(y), x - y}\rangle \geq \mu\|x-y\|^2.
  \]
\end{definition}

\begin{definition}
  An operator $F : \C \to \R^n$ is called $\beta$-cocoercive for some $\beta > 0$ if for all $x,y \in \C$,
  \[
    \langle{F(x) - F(y), x - y}\rangle \geq \beta\|F(x) - F(y)\|^2.
  \]
\end{definition}

\begin{definition}
  A nonempty set $C \subseteq \R^n$ is $\alpha$-strongly convex if for every $x,y\in C$ and every $t \in [0,1]$,
  \[
    B\left((1-t)x + ty, \frac{\alpha}{2}t(1-t)\lVert x - y\rVert^2\right) \subseteq C.
  \]
\end{definition}

\begin{definition} \label{def:soln}
  Let $I \subseteq \R$ be an interval and let $G : \R^n \rightrightarrows \R^n$.
  A map $x : I \to \R^n$ is called a solution of the differential inclusion
  \[
    \dot x(t) \in G(x(t)),
  \]
  if $x$ is absolutely continuous and
  \[
    \dx(t) \in G(x(t)), \qquad \text{for a.e. } t \in I.
  \]
  All solutions in this paper are understood in this sense.
\end{definition}

\section{Asymptotic convergence to solutions of the VIP} \label{sec:hammond}
Throughout this section, we assume that $F:\C \to \R^n$ is monotone and $C^1$, that $\C\subseteq\R^n$ is nonempty, compact, and convex,
and that $\{x_k\}$ is the sequence generated by \eqref{alg:gfp}.

\begin{lemma} \label{lem:sol-nonempty}
   The solution set $\mS$ is nonempty.
\end{lemma}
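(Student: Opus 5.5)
This is the classical Hartman--Stampacchia existence theorem, and the plan is to prove it directly through Brouwer's fixed point theorem, applied to a projection map. Let $P_\C$ denote the Euclidean projection onto $\C$. Since $\C$ is nonempty, closed and convex, $P_\C$ is well defined and nonexpansive, hence continuous. Define
\[
  T(x) := P_\C\bigl(x - F(x)\bigr), \qquad x \in \C .
\]
Because $F$ is $C^1$, it is in particular continuous, so $T$ is a continuous self-map of $\C$. The set $\C$ is nonempty, compact and convex, so Brouwer's theorem gives a point $x^*\in\C$ with $T(x^*)=x^*$.

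Next we show that such a fixed point solves \eqref{prob:vip}. The projection is characterized by the variational inequality
\[
  \langle y - P_\C(y), z - P_\C(y)\rangle \le 0 \quad \text{for all } z\in\C .
\]
Apply it with $y = x^* - F(x^*)$ and $P_\C(y) = x^*$. This yields $\langle -F(x^*), z - x^*\rangle \le 0$, that is, $\langle F(x^*), z - x^*\rangle \ge 0$ for every $z\in\C$. Hence $x^*\in\mS$, and $\mS$ is nonempty.

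The argument uses neither monotonicity nor differentiability beyond continuity, and it involves no real obstacle. The only points needing care are the continuity of $T$, which comes from composing continuous maps, and the fact that $T$ maps into $\C$, which holds by the definition of the projection.

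An alternative route uses only monotonicity. Consider the sets $\{x\in\C : \langle F(z), z - x\rangle \ge 0\}$ for $z\in\C$; these define the Minty solutions. By the KKM lemma the family has the finite intersection property, and compactness then gives a common point. Minty's lemma, which needs continuity of $F$ along segments, shows that this point lies in $\mS$. The Brouwer argument is shorter, so I would present that one.
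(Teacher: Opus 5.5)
Your proof is correct and takes the same route as the paper. The paper cites the Hartman--Stampacchia theorem, using only continuity of $F$ and the compactness and convexity of $\C$, and your Brouwer argument with $T(x)=P_\C(x-F(x))$ and the projection inequality is the standard proof of that theorem, so you have simply unpacked the citation (and, as you note, monotonicity plays no role).
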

\begin{proof}
  This follows from the Hartman-Stampacchia theorem \cite[Lemma~3.1]{Hartman1966} and the assumption that $F$ is $C^1$ and $\C$ is nonempty, convex, and compact.
\end{proof}

Define the following set-valued map on $\C$:
\[
  \mF(x) := \beta(F(x)) - x,\qquad x \in \C.
\]
To analyze \eqref{alg:gfp}, we introduce the differential inclusion
\begin{equation}\label{eqn:BR}\tag{DI-$\C$}
  \dot x(t)\in \mF(x(t)) = \beta(F(x(t))) - x(t),\qquad x(t) \in \C.
\end{equation}
This is the continuous-time counterpart of the Frank-Wolfe iteration, since
\[
  \frac{x_{k+1}-x_k}{\gamma_{k+1}} = s_k-x_k \in \beta(F(x_k)) - x_k = \mF(x_k).
\]
Thus, if the step size $\gamma_{k+1}$ is viewed as a small time increment, the discrete update formally approaches the inclusion above.
For each $x\in\C$, the set $\mF(x)=\beta(F(x))-x$ consists of all vectors of the form $s-x$ with $s\in\beta(F(x))$. Thus, the differential inclusion
\eqref{eqn:BR}
means that, at each time $t$, the rate of change of the trajectory $x(t)$ is given by the vector from the current point $x(t)$ to some current solution $s(t)\in \beta(F(x(t)))$ of the linear minimization oracle.

However, $\mF$ is defined only for points in $\C$, while the results of \cite{Benaim2005} we appeal to are stated for inclusions on all of $\R^n$. We therefore introduce the projected extension (cf. \cite[Remark~1.2]{Benaim2005})
\[
  \tmF(x) := \beta(F(P(x))) - x = \big\{s - x : s \in \beta(F(P(x)))\big\},
\]
where $P:\R^n\to\C$ is the Euclidean projection. Since $P(x)=x$ for every $x\in\C$, the extension agrees with $\mF$ on $\C$:
\[
  \tmF(x)=\mF(x),\quad x\in\C.
\]
Accordingly, we will study the following differential inclusion defined on the whole space:
\begin{equation} \label{prob:pdi} \tag{DI}
  \dot x(t)\in \tmF(x(t)) = \beta(F(P(x(t)))) - x(t), \qquad t \in \R.
\end{equation}

\begin{lemma}\label{lem:Chen}
Recall the gap function $V:\C\to\R_+$ given by
\[
  V(x):= \langle{F(x),x}\rangle -\min_{s\in\C}\langle{F(x),s}\rangle = \max_{s \in \C}\langle{F(x), x - s}\rangle.
\]
The following hold.
\begin{enumerate}[label=(\alph*)]
\item $V$ is Lipschitz continuous on $\C$, $V(x)\geq 0$ for all $x\in\C$, and $V^{-1}(0)=\mS$. Moreover, $\mS$ is closed.
\item Let $x:[0,\infty)\to\C$ be any solution of \eqref{eqn:BR}.
Then $t\mapsto V(x(t))$ satisfies
\[
  \frac{d}{dt}V(x(t)) \leq -V(x(t))\quad \text{for a.e. }t\geq 0.
\]
Consequently,
\[
  V(x(t))\leq e^{-t}V(x(0))\quad\forall t\geq 0.
\]
In particular, if $x(0)\notin \mS$ then $V(x(t)) < V(x(0))$ for every $t>0$,
and if $x(0)\in\mS$ then $V(x(t)) = 0$ for all $t\ge 0$.
\end{enumerate}
\end{lemma}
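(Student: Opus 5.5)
For part (a), I will write $V(x)=\max_{s\in\C} g(x,s)$ with $g(x,s)=\langle F(x),x-s\rangle$. Since $F$ is $C^1$ on the compact set $\C$, it is bounded by some $M$ and Lipschitz with some constant $L$.
\begin{itemize}
\item \emph{Lipschitz continuity.} For each fixed $s$, $g(\cdot,s)$ is Lipschitz with constant $L\,\diam(\C)+M$, uniformly in $s$. A maximum of uniformly Lipschitz functions is Lipschitz with the same constant.
\item \emph{Nonnegativity.} Take $s=x$ in the maximum; this gives $V(x)\ge 0$.
\item \emph{Zero set.} $V(x)=0$ holds iff $\langle F(x),x-s\rangle\le 0$ for all $s\in\C$, which is exactly the defining condition of $\mS$.
\item \emph{Closedness.} $\mS=V^{-1}(0)$ is the preimage of a closed set under a continuous map.
\end{itemize}

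For part (b), let $x(\cdot)$ be a solution of \eqref{eqn:BR}. Since $x$ is absolutely continuous and $V$ is Lipschitz, $t\mapsto V(x(t))$ is absolutely continuous and differentiable a.e.

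At a time $t$ where $V\circ x$, $x$ are both differentiable and $\dx(t)=s(t)-x(t)$ with $s(t)\in\beta(F(x(t)))$, I use a Danskin-type one-sided bound. For small $h>0$, take $s=s(t+h)$, or any maximizer $\sigma_h$ at time $t+h$. Then
\[
V(x(t+h))-V(x(t))\le \langle F(x(t+h)),x(t+h)-\sigma_h\rangle-\langle F(x(t)),x(t)-\sigma_h\rangle .
\]
I divide by $h$ and pass to a subsequence with $\sigma_h\to\sigma$; by upper semicontinuity of the argmax, $\sigma$ maximizes $g(x(t),\cdot)$. This yields
\[
\tfrac{d}{dt}V(x(t))\le \langle DF(x)\dx,\,x-\sigma\rangle+\langle F(x),\dx\rangle .
\]
Since $\sigma$ and $s(t)$ are both in $\beta(F(x(t)))$, the quantity $\langle F(x),x-\sigma\rangle$ does not depend on the maximizer chosen. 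I would therefore rather pick the maximizer so that the first term can be controlled. A cleaner route is the inequality $V(x(t+h))\ge\langle F(x(t+h)),x(t+h)-s\rangle$ for the fixed $s=s(t)$, combined with the max structure; I expect this first-order bookkeeping to be fiddly but routine.

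Now evaluate the two terms.
\begin{itemize}
\item \emph{Second term.} $\langle F(x),\dx\rangle=\langle F(x),s-x\rangle=-V(x(t))$.
\item \emph{First term.} Monotonicity and $C^1$ give $\langle DF(x)v,v\rangle\ge 0$ for every direction $v$ in the tangent cone of $\C$ at $x$. I need $\langle DF(x)\dx,\,x-\sigma\rangle\le 0$. This holds if I can take $\sigma=s(t)$, since then $x-\sigma=-\dx$ and the term equals $-\langle DF(x)\dx,\dx\rangle\le 0$.
\end{itemize}
This is the main obstacle: matching the maximizer $\sigma$ arising in the derivative with the velocity's $s(t)$. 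Here I use that for a.e. $t$, $\dx(t)$ exists and the derivative of $V\circ x$ exists. Because $V$ is a max, its derivative along $x$ at such $t$ equals the directional derivative, and for every maximizer $\sigma$ this gives
\[
\tfrac{d}{dt}V(x(t))\le \langle DF(x)\dx,x-\sigma\rangle+\langle F(x),\dx\rangle .
\]
To justify this, I take the maximizer at time $t$ and use the lower bound from the left side ($h<0$). Dividing by $h<0$ flips the inequality and gives exactly the upper bound with $\sigma$ any maximizer at time $t$. Choosing $\sigma=s(t)$ then gives $\tfrac{d}{dt}V\le -V$.

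Finally, Grönwall applied to the absolutely continuous function $e^{t}V(x(t))$, whose derivative is $\le 0$ a.e., gives $V(x(t))\le e^{-t}V(x(0))$. If $x(0)\notin\mS$, then $V(x(0))>0$, so $e^{-t}V(x(0))<V(x(0))$ for $t>0$. If $x(0)\in\mS$, then $0\le V(x(t))\le 0$ for all $t\ge 0$.
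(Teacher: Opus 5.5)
Your proof is correct, and it has the same structure as the paper's: the four checks in (a), the inequality $\frac{d}{dt}V(x(t))\le -V(x(t))$ for a.e.\ $t$, and the same Gr\"onwall step through the nonincreasing absolutely continuous function $e^{t}V(x(t))$.

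The difference is that the paper proves neither substantive ingredient; it takes both the Lipschitz continuity of $V$ and the differential inequality from \cite[Theorem~1]{Chen2024}. You derive both:
\begin{itemize}
\item \emph{Lipschitz continuity} comes from the uniform estimate $|g(x,s)-g(y,s)|\le (L\diam(\C)+M)\|x-y\|$ for $s\in\C$.
\item \emph{The differential inequality} comes from an envelope argument. For $\sigma\in\beta(F(x(t)))$, the function $h\mapsto V(x(t+h))-g(x(t+h),\sigma)$ is nonnegative and vanishes at $h=0$, so the left difference quotient gives $\frac{d}{dt}V(x(t))\le\frac{d}{dt}g(x(t),\sigma)$. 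You then take $\sigma=s(t)$ and use $\langle DF(x(t))\dot x(t),\dot x(t)\rangle\ge 0$, which follows from monotonicity because $\dot x(t)=s(t)-x(t)$ is a feasible direction at $x(t)$.
\end{itemize}
The paper's route is shorter but asks the reader to trust that Chen's setting covers this one. Yours is self-contained and shows exactly where monotonicity and the $C^1$ hypothesis are used.

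When writing it up, discard the first attempt with maximizers $\sigma_h$ at time $t+h$, and drop the ``fiddly but routine'' hedge. Applying the same observation to both one-sided quotients gives the equality $\frac{d}{dt}V(x(t))=\frac{d}{dt}g(x(t),\sigma)$ for every $\sigma\in\beta(F(x(t)))$. So the maximizer-matching obstacle you flag does not actually arise.
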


\begin{proof}
(a) The fact that $V$ is Lipschitz on $\C$ follows from \cite[Theorem~1]{Chen2024} with the assumption that $F$ is $C^1$. Nonnegativity is clear from the definition of $V$ and the fact that $x \in \C$.
$V(x)=0$ iff $x \in \mS$, because for any $x \in \C$
\[
  \max_{s \in \C} \langle{F(x), x - s}\rangle = 0 \iff \langle{F(x), x - z}\rangle\leq 0\quad\forall z \in \C \iff \langle{F(x), z - x}\rangle \geq 0\quad\forall z \in \C
\]
To show $\mS$ is closed, we note that since $\mS = V^{-1}(\{0\})$ and $V$ is Lipschitz continuous,
$\mS$ is the preimage of a closed set under a continuous map and therefore must be closed in $\C$. Because $\C$ is closed in $\R^n$,
$\mS$ must be closed in $\R^n$.

(b) The differential inequality comes from the proof of \cite[Theorem~1]{Chen2024}. Let $x$ be a solution of \eqref{eqn:BR}. To obtain the bound on $V(x(t))$ for all $t \geq 0$, fix some $T > 0$. Recall from (a) 
that $V$ is Lipschitz continuous, and $t \mapsto x(t)$ is absolutely continuous by Definition~\ref{def:soln}. Hence, their composition
$V(x(t))$ is absolutely continuous on $[0,T]$. We proceed by showing a Gr\"onwall-type inequality inspired by the proof of \cite[Theorem~1.12]{Tao2006}.
Let $v(t) := V(x(t))$ and $u(t) := v(t)e^t$, then $u(t)$ is absolutely continuous on $[0,T]$, since $v(t)$ is. Moreover,
since $v'(t) \leq -v(t)$ for a.e. $t \geq 0$,
\[
  u'(t) = v'(t) e^t + v(t)e^t \leq -v(t)e^t + v(t)e^t = 0,
\]
for a.e. $t \in [0,T]$, which along with the absolute continuity of $u(t)$, implies that for any $0 \leq r \leq t \leq T$,
\[
  u(t) - u(r) = \int_r^t u'(s)\ ds \leq 0.
\]
Hence, $u(t)$ is nonincreasing on $[0,T]$, and in particular $u(t) \leq u(0)$ for all $t \in [0,T]$. This can be rewritten as
\[
  v(t)e^t \leq v(0)e^0 = v(0),\qquad \forall t \in [0,T],
\]
which after rearranging gives
\[
  V(x(t))\leq e^{-t}V(x(0))\quad\forall t \in [0,T].
\]
Since $T > 0$ was arbitrary, the above inequality holds for all $t \geq 0$.
\end{proof}

We now construct an interpolating curve for the algorithm \eqref{alg:gfp}. Set $\tau_0 := 0$ and
\[
  \tau_{k+1} := \tau_k + \gamma_{k+1} = \sum_{i=1}^{k+1}\gamma_i.
\]
Since $\sum_{k=1}^{\infty}\gamma_k = \infty$, it follows that $\tau_k \to \infty$ as $k \to \infty$. 
Suppose $\{x_k\}$ is the sequence of iterates generated by \eqref{alg:gfp} under the assumptions of Section~\ref{sec:intro}.
Define the interpolating curve $w : [0,\infty) \to \C$
so that $w(\tau_k) = x_k$ for each $k$ and it is linear on the time segments $t \in [\tau_k,\tau_{k+1}]$:
\[
  w(t) := (1-\theta(t))x_k + \theta(t)x_{k+1},\quad t \in [\tau_k,\tau_{k+1}],
\]
where
\[
  \theta(t) = \frac{t-\tau_k}{\gamma_{k+1}} = \frac{t - \tau_k}{\tau_{k+1} - \tau_k},\quad t \in [\tau_k,\tau_{k+1}].
\]
Clearly, if $t = \tau_k$ $w(t) = x_k$ and if $t = \tau_{k+1}$, $w(t) = x_{k+1}$. Also,
\[
 0 = \frac{\tau_k - \tau_k}{\tau_{k+1} - \tau_k} \leq \frac{t - \tau_k}{\tau_{k+1} - \tau_k} \leq \frac{\tau_{k+1} - \tau_k}{\tau_{k+1} - \tau_k} = 1,
\]
so $\theta(t) \in [0,1]$. Therefore, for any $t$, $w(t)$ is a convex combination of points in $\C$, hence $w(t) \in \C$ for all $t$.
On each open interval $(\tau_k,\tau_{k+1})$, $w$ is linear and is written in full as
\[
  w(t) = t\frac{x_{k+1} - x_k}{\gamma_{k+1}} + \left(1 + \frac{\tau_k}{\gamma_{k+1}}\right)x_k - \frac{\tau_k}{\gamma_{k+1}}x_{k+1},
\]
with derivative
\[
  \dw(t) = \frac{x_{k+1} - x_k}{\gamma_{k+1}} = \frac{x_{k+1} - x_k}{\tau_{k+1} - \tau_k}.
\]
Since $x_{k+1} - x_k = \gamma_{k+1}(s_k - x_k)$, we have for $t \in (\tau_k,\tau_{k+1})$
\[
  \dw(t) = s_k - x_k \in \beta(F(x_k)) - x_k = \mF(x_k),
\]
where the derivative exists for all $t \geq 0$, except the breakpoints $t = \tau_k$ for all $k$. Hence, $\dw(t) \in \mF(x_k)$ a.e. on $t \geq 0$.
The next few results show that this fits into the framework of \cite{Benaim2005}.

\begin{lemma}\label{lem:BHS-H1.1}
The projected extension $\tmF$ satisfies the following:
\begin{enumerate}
  \item[(a)] $\tmF(x)$ is nonempty, compact, and convex for all $x\in\R^n$.
  \item[(b)] $\tmF$ has closed graph in $\R^n\times\R^n$.
  \item[(c)] There exists $c>0$ such that $\sup_{z\in\tmF(x)}\|z\|\leq c(1+\|x\|)$ for all $x\in\R^n$.
\end{enumerate}
\end{lemma}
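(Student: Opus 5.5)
The plan is to verify the three properties directly from the structure $\tmF(x)=\beta(F(P(x)))-x$, using three facts: $\C$ is compact and convex, $P$ is continuous (indeed $1$-Lipschitz), and $F$ is continuous (being $C^1$).

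For (a), fix $x\in\R^n$ and set $u:=F(P(x))$. The map $s\mapsto\langle u,s\rangle$ is continuous and linear, and $\C$ is nonempty and compact, so the minimum is attained and $\beta(u)$ is nonempty. Moreover $\beta(u)=\C\cap\{s:\langle u,s\rangle=\min_{z\in\C}\langle u,z\rangle\}$ is the intersection of a compact convex set with a closed hyperplane (or with all of $\R^n$ when $u=0$). Hence $\beta(u)$ is compact and convex. Translating by $-x$ preserves all three properties.

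For (b), I would first show that $\beta$ has closed graph on $\R^n$. Take $u_j\to u$ and $s_j\in\beta(u_j)$ with $s_j\to s$. Then $s\in\C$ because $\C$ is closed. For any fixed $z\in\C$ we have $\langle u_j,s_j\rangle\le\langle u_j,z\rangle$, and passing to the limit gives $\langle u,s\rangle\le\langle u,z\rangle$, so $s\in\beta(u)$. Now take $(x_j,y_j)\in\gra\tmF$ with $(x_j,y_j)\to(x,y)$. Write $y_j=s_j-x_j$ with $s_j\in\beta(F(P(x_j)))$. Then $s_j=y_j+x_j\to y+x=:s$. Continuity of $F\circ P$ gives $u_j:=F(P(x_j))\to F(P(x))$, and the closed-graph property of $\beta$ yields $s\in\beta(F(P(x)))$. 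Therefore $y=s-x\in\tmF(x)$.

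For (c), every $z\in\tmF(x)$ has the form $s-x$ with $s\in\C$. Hence $\|z\|\le\|s\|+\|x\|\le R+\|x\|$, where $R:=\max_{s\in\C}\|s\|<\infty$ by compactness. Taking $c:=\max\{R,1\}$ gives $\|z\|\le c(1+\|x\|)$.

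None of these steps is a genuine obstacle. The only point needing care is (b): the closed-graph argument for $\beta$ must pass to the limit in the optimality inequality using $u_j\to u$ and $s_j\to s$ simultaneously, with the bilinear term handled by boundedness of $\C$. The continuity of the composition $F\circ P$ is exactly what the projected extension was introduced for.
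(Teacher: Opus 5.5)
Your proposal is correct and follows essentially the same argument as the paper for all three parts, including the same constant $c=\max(R,1)$ in (c). The only small difference is in (b): the paper uses compactness of $\C$ to pass to a convergent subsequence of the minimizers $b_k$, whereas you note that $s_j = y_j + x_j$ already converges, so no subsequence is needed.
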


\begin{proof}
Since $\C$ is compact and $s\mapsto\langle u,s\rangle$ is continuous and affine,
$\beta(u)$ is nonempty and compact for every $u$. It is also convex because it is the
set of minimizers of a linear functional over a convex set. Also, since $\C$ is compact and convex, $P$ is single-valued
and nonexpansive, so continuous.

(a) Fix $x\in\R^n$. Then $\beta(F(P(x)))$ is nonempty, compact, convex, and contained in $\C$.
Hence
\[
  \tmF(x)=\beta(F(P(x))) - x
\]
is a translation of a nonempty, compact, and convex set, so it is itself nonempty, compact, and convex.

(b) Let $x_k\to x$ in $\R^n$ and $y_k\to y$ in $\R^n$ with $y_k\in\tmF(x_k)$.
Then there exist $b_k\in \beta(F(P(x_k)))$ such that
\[
  y_k=b_k-x_k.
\]
Since $b_k\in\C$ and $\C$ is compact, we may pass to a subsequence such that $b_k\to b\in\C$.
Because $P$ and $F$ are continuous, $F(P(x_k))\to F(P(x))$.

Now fix any $s\in\C$. Optimality of $b_k$ gives
\[
  \langle F(P(x_k)), b_k\rangle \leq \langle F(P(x_k)), s\rangle, \quad \forall k.
\]
Taking $k\to\infty$ yields
\[
  \langle F(P(x)), b\rangle \le \langle F(P(x)), s\rangle, \quad \forall s\in\C,
\]
so $b\in \beta(F(P(x)))$. Finally, $y_k=b_k-x_k\to b-x=:y$, hence
$y\in \beta(F(P(x))) - x = \tmF(x)$. Therefore $\gra(\tmF)$ is closed.

(c) Let $M:=\max_{u\in\C}\lVert u\rVert$ and note $M < \infty$ because $\C$ is compact. For any $z\in\tmF(x)$ we can write
$z=b-x$ with $b\in\C$, hence $\lVert z\rVert \leq \lVert b\rVert + \lVert x\rVert \leq M+\lVert x\rVert$.
Thus $\sup_{z\in\tmF(x)}\lVert z\rVert \leq M+\lVert x\rVert \leq c(1+\lVert x\rVert)$ with $c:=\max(M,1)$.
\end{proof}

\begin{remark}\label{rem:existence}
By Lemma~\ref{lem:BHS-H1.1}, the differential inclusion \eqref{prob:pdi} admits at least one solution $x : \R \to \R^n$ through every initial
condition $x(0) = x_0\in\R^n$ (see \cite[Section~1.2]{Benaim2005}).
\end{remark}

\begin{lemma} \label{lem:lipschitz-w}
  The interpolated curve $w$ is Lipschitz continuous.
\end{lemma}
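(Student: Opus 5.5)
We will show that $w$ is Lipschitz on $[0,\infty)$ with constant $L:=\diam(\C)$. The plan is to bound the slope of $w$ on each linear piece uniformly, and then chain these bounds across breakpoints using the triangle inequality.

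\emph{Step 1: uniform bound on each piece.} On each segment $[\tau_k,\tau_{k+1}]$ the curve $w$ is affine with slope $\dw(t)=s_k-x_k$. Both $s_k$ and $x_k$ lie in $\C$, so $\|s_k-x_k\|\le \diam(\C)$. Hence for $\tau_k\le r\le t\le\tau_{k+1}$,
\[
  \|w(t)-w(r)\| = (t-r)\|s_k-x_k\|\le \diam(\C)\,(t-r).
\]

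\emph{Step 2: chaining across breakpoints.} For general $0\le r\le t$, suppose $r\in[\tau_j,\tau_{j+1}]$ and $t\in[\tau_m,\tau_{m+1}]$ with $j\le m$. If $j=m$ we are done by Step 1. Otherwise we insert the breakpoints $\tau_{j+1},\dots,\tau_m$ and apply the triangle inequality:
\[
  \|w(t)-w(r)\|\le \|w(\tau_{j+1})-w(r)\|+\sum_{i=j+1}^{m-1}\|w(\tau_{i+1})-w(\tau_i)\|+\|w(t)-w(\tau_m)\|.
\]
Each term is bounded by $\diam(\C)$ times the corresponding time increment, and these increments telescope to $t-r$. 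Only finitely many breakpoints lie between $r$ and $t$, because $\gamma_k>0$ makes $\tau_k$ strictly increasing with $\tau_k\to\infty$.

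\emph{Main obstacle.} No step is genuinely hard. The one point needing care is that $w$ is well defined and continuous at the breakpoints, since both adjacent pieces give $w(\tau_k)=x_k$; this is what makes the chaining in Step 2 legitimate. As an alternative, one could note that $w$ is absolutely continuous with $\|\dw\|\le\diam(\C)$ almost everywhere and integrate, but the elementary chaining argument avoids any appeal to absolute continuity.
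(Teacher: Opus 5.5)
Your proof is correct and takes essentially the same route as the paper. Both arguments first bound each linear piece by $\|w(t)-w(r)\|\le \diam(\C)\,|t-r|$, using $\|s_k-x_k\|\le\diam(\C)$, and then telescope via the triangle inequality across the intermediate breakpoints; the only cosmetic difference is that you read off the slope $s_k-x_k$ directly, whereas the paper writes the difference as $(\theta(t)-\theta(t'))(x_{k+1}-x_k)$ and cancels the $\gamma_{k+1}$ factors.
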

\begin{proof}
  Recall, on $[\tau_k, \tau_{k+1}]$
  \[
    w(t) = (1-\theta(t))x_k + \theta(t)x_{k+1},
  \]
  so for any $t,t' \in [\tau_k,\tau_{k+1}]$,
  \[
    w(t) - w(t') = \theta(t')x_k - \theta(t')x_{k+1} - \theta(t) x_k + \theta(t)x_{k+1} = \left(\theta(t) - \theta(t')\right)(x_{k+1} - x_k).
  \]
  Hence,
  \[
    \lVert w(t) - w(t')\rVert = \lVert\left(\theta(t) - \theta(t')\right)(x_{k+1} - x_k)\rVert \leq \lvert\theta(t) - \theta(t')\rvert\lVert x_{k+1} - x_k\rVert.
  \]
  But $x_{k+1} - x_k = \gamma_{k+1}(s_k - x_k)$ and $x_k,s_k \in \C$, so $\lVert s_k - x_k\rVert \leq \diam(\C)$, and $\diam(\C) < \infty$ by compactness.
  Also,
  \[
    \theta(t) - \theta(t') = \frac{t - \tau_k}{\gamma_{k+1}} - \frac{t' - \tau_k}{\gamma_{k+1}} = \frac{t-t'}{\gamma_{k+1}}.
  \]
  Thus,
  \[
    \lVert w(t) - w(t')\rVert \leq \diam(\C)\lvert t - t'\rvert.
  \]
  Now consider the case of arbitrary $a,b \in \R_+$. We may assume wlog that $a < b$. If $a,b$ lie in the same interval, we are in the case of the above,
  so suppose $a \in [\tau_m,\tau_{m+1}]$ and $b \in [\tau_n, \tau_{n+1}]$ where $m \leq n$. We may write $w(b) - w(a)$ using a telescoping sum:
  \[
    w(b) - w(a) = \left(w(b) - w(\tau_n)\right) + \left(w(\tau_{m+1}) - w(a)\right) + \sum_{k=m+1}^{n-1}\left(w(\tau_{k+1}) - w(\tau_{k}))\right).
  \]
  By the triangle inequality and the above case when the two breakpoints are in the same interval, we have
  \begin{align*}
    \lVert w(b) - w(a)\rVert &\leq \lVert w(b) - w(\tau_n)\rVert + \lVert w(\tau_{m+1}) - w(a)\rVert + \sum_{k=m+1}^{n-1}\lVert w(\tau_{k+1}) - w(\tau_k)\rVert\\
                             &\leq \diam(\C)(b-\tau_n) + \diam(\C)(\tau_{m+1} - a) + \diam(\C)\sum_{k=m+1}^{n-1}(\tau_{k+1} - \tau_k)\\
                             &= \diam(\C)(b-\tau_n) + \diam(\C)(\tau_{m+1} - a) + \diam(\C)(\tau_n - \tau_{m+1})\\
                             &= \diam(\C)(b-a).
  \end{align*}
  It follows that $w$ is Lipschitz continuous.
\end{proof}

The interpolated curve $w$ is not, in general, an exact solution of the differential inclusion \eqref{prob:pdi}.
Nevertheless, because it is obtained by linearly interpolating the Frank-Wolfe iterates, it provides a natural
approximation of the continuous-time dynamics, up to a discretization error that vanishes asymptotically as
$\gamma_k\to 0$.
To formalize this idea, we use the notion of a \emph{perturbed solution} \cite[Definition~II]{Benaim2005}.
Roughly speaking, a perturbed solution is an absolutely continuous curve that satisfies the differential inclusion up to errors
that become negligible in the long run. This notion is useful because, even though a perturbed solution is only approximate,
we will see that its asymptotic behavior can still be analyzed through the corresponding differential inclusion.

\begin{definition} \label{def:perturbed-soln}
  Let $y : [0,\infty) \to \R^n$ be continuous. We say that $y$ is a perturbed solution of the differential inclusion \eqref{prob:pdi}
  if:
  \begin{enumerate}[label=(\roman*)]
    \item $y$ is absolutely continuous;
    \item there exist a locally integrable function $t \mapsto U(t)$ and $\delta:[0,\infty) \to \R_+$ with $\delta(t)\to 0$ as $t\to\infty$ such that:
    \begin{enumerate}[label=(\alph*)]
      \item for all $T>0$,
      \[
        \lim_{t\to\infty}\sup_{0\leq v\leq T}\bigg\|\int_t^{t+v}U(s)\,ds\bigg\| = 0;
      \]
      \item $\dot y(t)-U(t)\in \tmF^{\delta(t)}(y(t))$
      for almost every $t>0$, where
      \[
        \tmF^\delta(x) := \big\{u\in\R^n : \exists z \in\R^n \text{ with } \|z-x\| < \delta \text{ and } \inf_{f \in \tmF(z)}\|u - f\| < \delta\big\}.
      \]
    \end{enumerate}
  \end{enumerate}
\end{definition}
Condition (ii)(a) says that the cumulative effect of the additive perturbation $U$ on every bounded time window becomes negligible as $t\to\infty$,
while condition (ii)(b) permits a vanishing approximation error both in the point at which the map is evaluated and in the inclusion itself.
In our setting, the interpolated curve $w$ satisfies this definition with $U \equiv 0$. Thus the only discrepancy from being an exact solution of \eqref{prob:pdi} is that, on each interval $(\tau_k,\tau_{k+1})$, one has
\[
  \dot w(t)=s_k-x_k\in \tmF(x_k),
\]
whereas an exact solution would require
\[
  \dot w(t)\in \tmF(w(t)).
\]
Since $w(t)$ remains close to $x_k$ on this interval and the distance tends to zero as $k\to\infty$, this discrepancy is asymptotically negligible.

\begin{lemma} \label{lem:perturbed}
  The interpolated curve $w$ is a perturbed solution of the differential inclusion \eqref{prob:pdi}.
\end{lemma}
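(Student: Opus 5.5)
We verify the conditions of Definition~\ref{def:perturbed-soln} directly, taking $U\equiv 0$ and choosing $\delta(t)$ to depend on the step size of the current interval. Condition (i) follows from Lemma~\ref{lem:lipschitz-w}: a Lipschitz function on $[0,\infty)$ is absolutely continuous on every compact interval. With $U\equiv 0$, which is trivially locally integrable, condition (ii)(a) holds because the integral vanishes identically.

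For (ii)(b), fix $t\in(\tau_k,\tau_{k+1})$; the breakpoints $\tau_k$ form a countable, hence null, set. We already computed $\dot w(t)=s_k-x_k\in\tmF(x_k)$, using $\tmF=\mF$ on $\C$. We therefore take $z=x_k$ as the witness point and $u=\dot w(t)$, so the infimum $\inf_{f\in\tmF(z)}\|u-f\|$ equals $0$. It remains to bound $\|x_k-w(t)\|$. By the computation in Lemma~\ref{lem:lipschitz-w},
\[
  \|w(t)-x_k\| = \theta(t)\|x_{k+1}-x_k\| \le \gamma_{k+1}\diam(\C).
\]
Accordingly, we define, for $t\in[\tau_k,\tau_{k+1})$,
\[
  \delta(t) := \gamma_{k+1}\diam(\C) + \frac{1}{k+1}.
\]
The additive term is there because the definition requires strict inequalities $<\delta$ and $\delta>0$, and it also handles the case $\diam(\C)=0$. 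With this choice, $\|x_k-w(t)\|<\delta(t)$ and $0<\delta(t)$, so $\dot w(t)\in\tmF^{\delta(t)}(w(t))$ for almost every $t$.

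Finally, we check that $\delta(t)\to 0$. Since $\tau_k\to\infty$ and each $\tau_k$ is finite, for every $t$ there is a unique $k$ with $t\in[\tau_k,\tau_{k+1})$. Moreover $k(t)\to\infty$ as $t\to\infty$, because $\tau_{K}$ is finite for each fixed $K$. Combined with $\gamma_k\to 0$, this gives $\delta(t)\to 0$. The only delicate points are this bookkeeping step and the strictness of the inequalities in $\tmF^\delta$; neither is a real obstacle. The argument uses that $\gamma_k\to0$ but does not need nonsummability of the steps.
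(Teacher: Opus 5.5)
Your proof is correct and follows the paper's argument essentially verbatim: $U\equiv 0$, witness $z=x_k$, and $\delta$ of order $\gamma_{k+1}$ on $[\tau_k,\tau_{k+1})$. The paper takes $\delta(t)=\gamma_{k+1}(1+\diam(\C))$, which is already strictly positive because $\gamma_{k+1}>0$, so your extra $1/(k+1)$ is harmless but unnecessary.

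One small correction to your closing remark: nonsummability is used. It is exactly what gives $\tau_k\to\infty$, which your own bookkeeping step invokes, and without it $w$ would not be defined on all of $[0,\infty)$ as the definition of a perturbed solution requires.
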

\begin{proof}
  By definition of $w$, it is a continuous function from $[0,\infty)$ to $\C \subseteq \R^n$. Moreover, since $w$ is Lipschitz continuous
  from Lemma~\ref{lem:lipschitz-w}, it is absolutely continuous. It remains to satisfy the conditions (ii) in Definition~\ref{def:perturbed-soln}.
  In Definition~\ref{def:perturbed-soln}, take $U \equiv 0$, then (ii)(a) must be satisfied. To satisfy (ii)(b), we need to show $\dw(t) \in \tmF^{\delta(t)}(w(t))$
  for almost every $t > 0$. Define $\delta(t) = \gamma_{k+1}(1 + \diam(\C))$ for each $t \in [\tau_k,\tau_{k+1})$, then clearly $\delta : [0,\infty) \to \R$ and $\delta(t) \to 0$.
  Recall
  \[
    \tmF^{\delta}(x) = \{u \in \R^n : \exists z \text{ with } \lVert z - x\rVert < \delta,\, \inf_{f \in \tmF(z)}\lVert u - f\rVert < \delta\}.
  \]
  We have already shown that $\dw(t) \in \mF(x_k) = \tmF(x_k)$, where the equality comes from $x_k \in \C$. Thus, if we take $z = x_k$ in the definition of $\tmF^{\delta(t)}(w(t))$, we have
  \[
    \inf_{f \in \tmF(x_k)} \lVert \dw(t) - f\rVert = 0 < \delta(t).
  \]
  Moreover, for $t \in (\tau_k,\tau_{k+1})$,
  \[
    w(t) - x_k = \theta(t)(x_{k+1} - x_k),
  \]
  and $\theta(t) \in (0,1)$, so
  \[
    \lVert w(t) - x_k\rVert = \theta(t)\lVert x_{k+1} - x_k\rVert \leq \lVert x_{k+1} - x_k\rVert = \gamma_{k+1}\lVert s_k - x_k\rVert \leq \gamma_{k+1}\diam(\C) < \delta(t).
  \]
  We have shown that $\dw(t) \in \tmF^{\delta(t)}(w(t))$ for all $t$ in the open segments $(\tau_k,\tau_{k+1})$. Since the endpoints form a countable set,
  we have shown that $\dw(t) \in \tmF^{\delta(t)}(w(t))$ for almost every $t > 0$, hence we have satisfied condition (ii)(b).
\end{proof}

Recall the definition of the limit set of $w$,
\[
  L(w) := \bigcap_{t \geq 0} \overline{\{w(s) : s \geq t\}}.
\]
Our goal will be to show that $L(w) \subseteq \mS$, and then conclude that every cluster point of $\{x_k\}$ belongs to $\mS$.
To do so, we use the notion of invariance of a set with respect to the differential inclusion \eqref{prob:pdi}.

\begin{definition}
  A set $A \subseteq \R^n$ is said to be invariant if for all $z \in A$ there exists a solution $x$ to \eqref{prob:pdi} with
  $x(0) = z$ where $x(\R) \subseteq A$.
\end{definition}

\begin{theorem}\label{thm:limitset}
  $L(w) \subseteq \mS$.
\end{theorem}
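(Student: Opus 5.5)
We will invoke the limit set theorem of \cite{Benaim2005} for perturbed solutions. By Lemma~\ref{lem:perturbed}, $w$ is a perturbed solution of \eqref{prob:pdi}, and by construction it is bounded, since $w(t)\in\C$ for all $t$. By Lemma~\ref{lem:BHS-H1.1} the standing hypotheses of \cite{Benaim2005} hold. Their main result (Theorem~3.6 there) then says that $L(w)$ is \emph{internally chain transitive} for the flow induced by \eqref{prob:pdi}. In particular, $L(w)$ is nonempty, compact, connected and invariant, and it contains no proper attractor of the restricted dynamics. Since $\C$ is closed and $w(t)\in\C$, we also have $L(w)\subseteq\C$. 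The plan is to combine this with a Lyapunov function argument based on $V$, via the Lyapunov-function result of \cite{Benaim2005} (Proposition~3.27).

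The first step is to extend $V$ to a continuous function on $\R^n$, for instance $\widetilde V:=V\circ P$. We then need to check that $\widetilde V$ is a Lyapunov function for $\mS$ on $\C$. Concretely: for any solution $x$ of \eqref{prob:pdi} with $x(0)\in\C$, we want $\widetilde V(x(t))\le \widetilde V(x(0))$, with strict inequality for $t>0$ when $x(0)\notin\mS$.

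To obtain this, we show that $\C$ is forward invariant under \eqref{prob:pdi}. Along a solution, $\dot x=s(t)-x(t)$ with $s(t)\in\C$. Hence
\[
  x(t)=e^{-t}x(0)+\int_0^t e^{-(t-r)}s(r)\,dr,
\]
which is a convex combination of points of $\C$ (using closedness and convexity of $\C$), so $x(t)\in\C$. On $\C$ we have $P=\mathrm{id}$, so such a solution also solves \eqref{eqn:BR}. Lemma~\ref{lem:Chen}(b) then gives $V(x(t))\le e^{-t}V(x(0))$, with strict decrease whenever $x(0)\notin\mS$.

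With this in hand, we can apply Proposition~3.27 of \cite{Benaim2005}. It requires, in addition, that $V(\mS)=\{0\}$ have empty interior in $\R$, which is trivial. Its conclusion is that every internally chain transitive set $L\subseteq\C$ satisfies $L\subseteq\mS$. The only subtlety is that the proposition is stated on an open or invariant region; we would restrict to the invariant set $\C$, or alternatively use $\widetilde V$ with the fact that $L(w)\subseteq\C$.

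If needed, we can instead argue directly, avoiding the proposition. By invariance, every $z\in L(w)$ lies on a complete solution $x:\R\to L(w)$ with $x(0)=z$. Since $L(w)\subseteq\C$, forward and backward pieces stay in $\C$, so the exponential decay bound applies on every interval $[s,0]$. This gives
\[
  V(z)\le e^{-(0-s)}V(x(s))\le e^{s}\max_{\C}V .
\]
Letting $s\to-\infty$ and using $\max_\C V<\infty$ (compactness and continuity) yields $V(z)=0$, so $z\in\mS$ by Lemma~\ref{lem:Chen}(a). This direct route uses only invariance of $L(w)$, which follows from Benaïm--Hofbauer--Sorin's Theorem~3.6. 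The main obstacle is verifying that the hypotheses of that theorem match Definition~\ref{def:perturbed-soln} and Lemma~\ref{lem:BHS-H1.1}, and that the entire solutions inside $L(w)$ indeed remain in $\C$ so that Lemma~\ref{lem:Chen}(b) applies to them; both points reduce to $L(w)\subseteq\C$.
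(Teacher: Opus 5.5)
Your direct argument is correct and is essentially the paper's proof: \cite[Theorem~3.6]{Benaim2005} makes $L(w)$ internally chain transitive, hence invariant (the paper cites \cite[Lemma~3.5]{Benaim2005} for this step). An entire solution through $z\in L(w)$ then stays in $L(w)\subseteq\C$ and so solves \eqref{eqn:BR}, and Lemma~\ref{lem:Chen}(b) on $[-T,0]$ gives $V(z)\le e^{-T}\max_{\C}V\to 0$.

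Drop the Proposition~3.27 route, though. \cite{Benaim2005} require the Lyapunov inequality along solutions from every point of $\R^n\setminus\mS$, and $V\circ P$ violates it whenever $\mS$ meets the boundary of $\C$: a point $x\notin\C$ with $P(x)\in\mS$ has $V(P(x))=0$ but $x\notin\mS$, so strict decrease is impossible. That route would therefore need a localized version of the proposition, which you have not justified.
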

\begin{proof}
Since $w([0,\infty))\subseteq \C$ and $\C$ is compact, we have $L(w)\subseteq \C$ and $L(w)$ is nonempty.

By Lemma~\ref{lem:perturbed}, $w$ is a bounded perturbed solution of the inclusion
$\dot x(t) \in \tmF(x(t))$, hence by \cite[Theorem~3.6]{Benaim2005} the set $L(w)$ is internally
chain transitive \cite[Definition VI]{Benaim2005} for the dynamical system generated by the differential inclusion \eqref{prob:pdi}. Therefore, by \cite[Lemma~3.5]{Benaim2005}, $L(w)$ is invariant. That is, for every
$z\in L(w)$ there exists a solution $x:\R\to\R^n$ of $\dot x(t)\in\tmF(x(t))$ with
$x(0)=z$ and $x(\R)\subseteq L(w)\subseteq \C$. Fix $z\in L(w)$ and let $x$ be such a solution in $L(w)$ with $x(0)=z$.
Because $x(\R)\subseteq\C$, we have $\tmF(x(t))=\mF(x(t))$ for all $t \in \R$, hence $x$ is also
a solution of the differential inclusion \eqref{eqn:BR} for a.e. $t \in \R$.

Let $M:=\max_{u\in \C} V(u)<\infty$, where the finiteness comes from the compactness of $\C$ and continuity of $V$ (from $V$ Lipschitz, cf. Lemma~\ref{lem:Chen}(a)).
For any $T>0$, define $y:[0,\infty)\to\C$ by $y(t):=x(t-T)$. Since $x$ is a solution of \eqref{eqn:BR} on $\R$, it follows that $y$ is a solution of
\eqref{eqn:BR} on $[0,\infty)$. Hence, we may apply Lemma~\ref{lem:Chen}(b) to $y$:
\[
  V(y(t)) \leq e^{-t} V(y(0)),\qquad\forall t \geq 0,
\]
which when taking $t = T$ is equivalent to
\[
  V(x(0)) \leq e^{-T}V(x(-T)).
\]
Since $x(-T)\in\C$, we must have $V(x(-T))\leq M$, so we may write
\[
V(z) = V(x(0)) \leq e^{-T} M, \qquad \forall\ T>0.
\]
Letting $T\to\infty$ gives $V(z)=0$. By Lemma~\ref{lem:Chen}(a), $V^{-1}(0)=\mS$, so
$z\in\mS$.
Thus $L(w)\subseteq \mS$.
\end{proof}

\begin{corollary} \label{cor:cluster}
Every cluster point of the sequence $\{x_k\}$ belongs to $\mS$. In particular, as $k \to \infty$,
\[
  \dist(x_k,\mS) \to 0
\]
\end{corollary}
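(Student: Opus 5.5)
The plan is to deduce the corollary directly from Theorem~\ref{thm:limitset}, using two facts: $x_k = w(\tau_k)$, and $\tau_k \to \infty$ because the step sizes are nonsummable.

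First, let $\bar x$ be a cluster point of $\{x_k\}$, so $x_{k_j} \to \bar x$ along some subsequence. Fix $t \ge 0$. Since $\tau_{k_j} \to \infty$, for all large $j$ we have $\tau_{k_j} \ge t$. Hence $x_{k_j} = w(\tau_{k_j}) \in \{w(s) : s \ge t\}$, and so $\bar x \in \overline{\{w(s): s\ge t\}}$. As $t$ was arbitrary, $\bar x \in L(w)$, and Theorem~\ref{thm:limitset} gives $\bar x \in \mS$.

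Second, to show $\dist(x_k,\mS)\to 0$, I argue by contradiction. Note that $\mS$ is nonempty by Lemma~\ref{lem:sol-nonempty} and closed by Lemma~\ref{lem:Chen}(a), so $x \mapsto \dist(x,\mS)$ is a well-defined, finite, $1$-Lipschitz function. Suppose there exist $\varepsilon>0$ and a subsequence with $\dist(x_{k_j},\mS)\ge\varepsilon$ for all $j$. The iterates lie in the compact set $\C$, so a further subsequence converges to some $\bar x\in\C$. By continuity of the distance function, $\dist(\bar x,\mS)\ge\varepsilon$. But $\bar x$ is a cluster point of $\{x_k\}$, so the first step gives $\bar x\in\mS$ and hence $\dist(\bar x,\mS)=0$, a contradiction.

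No step is a real obstacle. The only point needing care is the link between the discrete iterates and the continuous curve $w$: we need $\tau_k\to\infty$ to place cluster points of $\{x_k\}$ inside $L(w)$. This follows from $\sum_k \gamma_k = \infty$, as already noted when the $\tau_k$ were defined.
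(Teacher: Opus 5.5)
Your proposal is correct and follows the paper's proof essentially step for step: cluster points of $\{x_k\}$ lie in $L(w)$ because $x_k = w(\tau_k)$ with $\tau_k\to\infty$, Theorem~\ref{thm:limitset} then places them in $\mS$, and the distance claim follows from compactness of $\C$ and a contradiction argument. Your explicit appeal to Lemma~\ref{lem:sol-nonempty}, so that $\dist(\cdot,\mS)$ is finite and $1$-Lipschitz, is a small point of care that the paper leaves implicit.
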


\begin{proof}
Let $\bar x$ be a cluster point of $\{x_k\}$. Then there exists a subsequence
$\{x_{k_j}\}$ such that $x_{k_j} \to \bar{x}$ as $j \to \infty$.
Since $x_k = w(\tau_k)$ for all $k$ and $\tau_k \to \infty$, we also have
$\tau_{k_j} \to \infty$ and $w(\tau_{k_j}) = x_{k_j} \to \bar{x}$.
We claim that $\bar x \in L(w)$. Indeed, fix any $t\ge 0$. Since $\tau_{k_j}\to\infty$,
there exists $j_0$ such that $\tau_{k_j}\ge t$ for all $j\ge j_0$. Hence
\[
  w(\tau_{k_j}) \in \{w(s): s\geq t\}, \qquad \forall j\geq j_0.
\]
Taking the limit and using $w(\tau_{k_j})\to \bar x$, we obtain
\[
  \bar x \in \overline{\{w(s): s\geq t\}}.
\]
Since $t\geq 0$ was arbitrary, it follows that
\[
  \bar x \in \bigcap_{t\ge 0} \overline{\{w(s): s\ge t\}} = L(w).
\]
By Theorem~\ref{thm:limitset}, $L(w)\subseteq \mS$, so $\bar x \in \mS$.
This proves the first claim.

For the second claim, suppose for contradiction that $\dist(x_k,\mS) \not\to 0$.
Then there exist $\epsilon >0$ and a subsequence $\{x_{k_j}\}$ such that
\[
  \dist(x_{k_j},\mS) \geq \epsilon, \qquad \forall j.
\]
Since $\{x_k\}\subseteq \C$ and $\C$ is compact, by passing to a further subsequence if
necessary, we have that $x_{k_j}\to \bar x$ for some $\bar x\in\C$.
By the above, $\bar x\in \mS$. Since $\mS$ is closed by Lemma~\ref{lem:Chen}(a),
$x\mapsto \dist(x,\mS)$ is continuous, and therefore
\[
  \dist(x_{k_j},\mS) \to \dist(\bar x,\mS) = 0,
\]
which contradicts $\dist(x_{k_j},\mS) \geq \epsilon$ for all $j$.
Hence $\dist(x_k, \mS) \to 0$ as $k \to \infty$.
\end{proof}

\begin{corollary}
  The Frank-Wolfe gap along the iterates $\{x_k\}$ converges to zero:
  \[
    V(x_k) \to 0 \text{ as } k\to\infty.
  \]
\end{corollary}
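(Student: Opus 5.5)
The plan is to deduce $V(x_k)\to 0$ from Corollary~\ref{cor:cluster} using continuity of $V$ and compactness of $\C$, with a subsequence argument by contradiction.

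Suppose $V(x_k)\not\to 0$. Since $V\geq 0$ on $\C$ by Lemma~\ref{lem:Chen}(a), there are $\epsilon>0$ and a subsequence $\{x_{k_j}\}$ with $V(x_{k_j})\geq\epsilon$ for all $j$. The iterates lie in the compact set $\C$, so after passing to a further subsequence we may assume $x_{k_j}\to\bar x\in\C$. Then $\bar x$ is a cluster point of $\{x_k\}$, and Corollary~\ref{cor:cluster} gives $\bar x\in\mS$. By Lemma~\ref{lem:Chen}(a), $V(\bar x)=0$.

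Lemma~\ref{lem:Chen}(a) also says $V$ is Lipschitz, hence continuous, on $\C$. Therefore $V(x_{k_j})\to V(\bar x)=0$, which contradicts $V(x_{k_j})\geq\epsilon$. It follows that $V(x_k)\to 0$.

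There is an alternative quantitative route. Let $L_V$ be the Lipschitz constant of $V$ and pick $p_k\in\mS$ with $\|x_k-p_k\|=\dist(x_k,\mS)$; such a point exists because $\mS$ is closed and nonempty (Lemmas~\ref{lem:sol-nonempty} and~\ref{lem:Chen}(a)). Then
\[
  V(x_k)=V(x_k)-V(p_k)\leq L_V\,\dist(x_k,\mS)\to 0 .
\]
I expect no real obstacle. The only point needing care is that the limit $\bar x$ stays in $\C$, which holds because $\C$ is closed, so that continuity of $V$ on $\C$ and the characterization $V^{-1}(0)=\mS$ apply.
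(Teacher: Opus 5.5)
Your proof is correct, and you give two routes. The second one, your ``alternative quantitative route,'' is essentially the paper's argument. The paper uses the Lipschitz constant $L$ of $V$ from Lemma~\ref{lem:Chen}(a) and the fact that $V\equiv 0$ on $\mS$ to get $V(x)\le L\,\dist(x,\mS)$ for all $x\in\C$, then applies the distance half of Corollary~\ref{cor:cluster}. It takes an infimum over $z\in\mS$ rather than picking a nearest point $p_k$, so it needs only $\mS\neq\emptyset$, not attainment.

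Your primary argument is genuinely different. It is a compactness-and-contradiction argument that uses only the cluster-point half of Corollary~\ref{cor:cluster} together with mere continuity of $V$.

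\begin{itemize}
\item What your route buys is economy of hypotheses. Continuity of $V$ is elementary: $V(x)=\langle F(x),x\rangle-\sigma(F(x))$ with $\sigma(u)=\min_{s\in\C}\langle u,s\rangle$, and $\sigma$ is Lipschitz with constant $\max_{s\in\C}\|s\|$. So you would not need the Lipschitz property of $V$ from \cite{Chen2024} at all, although as written you still invoke it.
\item What the paper's route buys is the quantitative inequality $V(x_k)\le L\,\dist(x_k,\mS)$. Any rate for $\dist(x_k,\mS)$ therefore transfers immediately to the gap, which your contradiction argument cannot provide.
\end{itemize}

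Both routes ultimately rest on Theorem~\ref{thm:limitset} in the same way.
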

\begin{proof}
  By Lemma~\ref{lem:Chen}(a), $V$ is Lipschitz continuous on $\C$, so there exists some $L>0$ such that
  \[
    \lvert V(x)-V(y)\rvert \leq L\lVert x-y\rVert, \qquad \forall x,y\in\C.
  \]
  Fix $x\in\C$. For any $z\in \mS$, Lemma~\ref{lem:Chen}(a) gives
  $V(z)=0$, so
  \[
    V(x)=|V(x)-V(z)| \leq L\|x-z\|.
  \]
  Taking the infimum of the above over $z\in \mS$ yields
  \[
    V(x)\leq L\,\dist(x,\mS).
  \]
  Applying this with $x=x_k$ and using $\dist(x_k,\mS)\to 0$ gives $V(x_k)\to 0$.
\end{proof}

\subsection{The strongly monotone case}
Suppose in addition that $F$ is $\mu$-strongly monotone for some $\mu > 0$, i.e. for all $x,y\in \C$
\[
  \langle{F(x) - F(y), x - y}\rangle \geq \mu\lVert x - y\rVert^2.
\]
Under this additional assumption, the solution set is a singleton.
\begin{lemma} \label{lem:singleton}
  The solution set $\mS$ is a singleton.
\end{lemma}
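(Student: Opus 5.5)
The plan has two parts: existence from Lemma~\ref{lem:sol-nonempty}, and uniqueness from the standard strong-monotonicity argument. By Lemma~\ref{lem:sol-nonempty}, $\mS \neq \emptyset$, so it only remains to show that any two solutions coincide.

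Take $x^*, y^* \in \mS$. Apply the defining inequality of \eqref{prob:vip} at $x^*$ with test point $z = y^* \in \C$, and at $y^*$ with test point $z = x^*$. This gives
\[
  \langle F(x^*), y^* - x^*\rangle \geq 0, \qquad \langle F(y^*), x^* - y^*\rangle \geq 0 .
\]
Adding the two inequalities yields
\[
  \langle F(x^*) - F(y^*), x^* - y^*\rangle \leq 0 .
\]
On the other hand, $\mu$-strong monotonicity gives
\[
  \langle F(x^*) - F(y^*), x^* - y^*\rangle \geq \mu\|x^*-y^*\|^2 .
\]
Combining the two bounds gives $\mu\|x^*-y^*\|^2 \le 0$. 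Since $\mu > 0$, it follows that $x^* = y^*$, so $\mS$ is a singleton.

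There is no real obstacle here. The only point needing care is that the existence part is not automatic from strong monotonicity alone; it relies on the compactness of $\C$ and the continuity of $F$, both of which are already used in Lemma~\ref{lem:sol-nonempty}.
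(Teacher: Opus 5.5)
Your proof is correct and follows the paper's argument: you add the two variational inequalities at the two solutions and combine the result with $\mu$-strong monotonicity to force $\|x^*-y^*\|=0$. You also explicitly cite Lemma~\ref{lem:sol-nonempty} for nonemptiness. The paper's proof leaves this step implicit, but it is needed to conclude ``singleton'' rather than just ``at most one point.''
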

\begin{proof}
  Suppose $x,y \in \mS$ are distinct. Then,
  \[
    \langle{F(x), y - x}\rangle \geq 0,\quad \langle{F(y), x - y}\rangle \geq 0.
  \]
  Adding the two gives
  \[
    \langle{F(x) - F(y), x - y}\rangle \leq 0.
  \]
  We may combine this with the definition of strong monotonicity to obtain
  \[
    0 \geq \langle{F(x) - F(y), x - y}\rangle \geq \mu\lVert x - y\rVert^2,
  \]
  which implies $x = y$. So $\mS$ must contain only one element.
\end{proof}

\begin{theorem}
  Let $x^*$ be the unique element in $\mS$. Then $x_k \to x^*$.
\end{theorem}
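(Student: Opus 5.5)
The proof is essentially immediate from Corollary~\ref{cor:cluster} combined with Lemma~\ref{lem:singleton}. By Lemma~\ref{lem:singleton}, $\mS=\{x^*\}$, so for every $x\in\C$ we have $\dist(x,\mS)=\lVert x-x^*\rVert$. Corollary~\ref{cor:cluster} gives $\dist(x_k,\mS)\to 0$, which is then literally the statement $\lVert x_k-x^*\rVert\to 0$.

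To make the argument self-contained and robust, I would also present the standard compactness argument. Suppose $x_k\not\to x^*$. Then there are $\epsilon>0$ and a subsequence with $\lVert x_{k_j}-x^*\rVert\ge\epsilon$. Since $\{x_k\}\subseteq\C$ and $\C$ is compact, a further subsequence converges to some $\bar x\in\C$, and $\bar x$ satisfies $\lVert\bar x-x^*\rVert\ge\epsilon$. By the first part of Corollary~\ref{cor:cluster}, $\bar x\in\mS=\{x^*\}$, which is a contradiction.

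There is no real obstacle here. The only point to note is that strong monotonicity is used solely through uniqueness of the solution (Lemma~\ref{lem:singleton}). All the dynamical-systems work has already been done in Theorem~\ref{thm:limitset}, which applies because a strongly monotone $F$ is in particular monotone.
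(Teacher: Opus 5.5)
Your proposal is correct and matches the paper's proof: Lemma~\ref{lem:singleton} gives $\mS=\{x^*\}$, so $\lVert x_k-x^*\rVert=\dist(x_k,\mS)\to 0$ by Corollary~\ref{cor:cluster}. The extra subsequence-compactness argument you add is valid but redundant, since it repeats the proof of the second part of Corollary~\ref{cor:cluster}.
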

\begin{proof}
  By Corollary~\ref{cor:cluster}, $\dist(x_k,\mS) \to 0$.
  Since $\mS = \{x^*\}$, by Lemma~\ref{lem:singleton}, we have
  \[
    \lVert x_k-x^*\rVert = \dist(x_k,\{x^*\}) = \dist(x_k,\mS) \to 0.
  \]
  Hence $x_k\to x^*$.
\end{proof}

\section{Rates of convergence}
\subsection{On obtaining rates from the continuous-time analysis}
By Lemma~\ref{lem:Chen}(b), every solution $x$ of \eqref{eqn:BR} satisfies
\[
  V(x(t)) \leq e^{-t}V(x(0)), \qquad t \geq 0.
\]
Thus the Frank-Wolfe gap converges to zero at an exponential rate along trajectories of the continuous-time dynamics.
It is not clear, however, how to transfer this rate to the discrete iterates. The difficulty is that the interpolated
curve $w$ associated with the sequence $\{x_k\}$ is not, in general, a solution of the differential inclusion, but only a
perturbed solution. On each interval $(\tau_k,\tau_{k+1})$,
\[
  \dw(t)= s_k - x_k \in \tmF(x_k),
\]
whereas an exact solution would satisfy
\[
  \dw(t)\in \tmF(w(t)).
\]
To deduce a discrete convergence rate from the continuous-time analysis, one would need
to control the error introduced by replacing $\tmF(w(t))$ with $\tmF(x_k)$.

This is the main obstruction in the setting where $\C$ is a general compact, convex set.
Without additional geometric assumptions on $\C$, the LMO need not
depend continuously on its argument, so small changes in $F(x)$ may produce large changes in the selected minimizer. Hence, even though
$\|w(t)-x_k\| = \mathcal O(\gamma_{k+1})$
for $t \in [\tau_k,\tau_{k+1}]$, this does not imply that one can choose minimizers in $\beta(F(w(t)))$ and $\beta(F(x_k))$ that are close.
For this reason, the continuous-time rate does not directly yield a rate for the discrete algorithm.

In the next subsection, we show that strong convexity of $\C$ gives sufficient regularity of the LMO
to control this error and derive rates for the discrete iterates.

\subsection{Rates over strongly convex sets}
When $\C$ is assumed to be strongly convex in addition to being nonempty and compact, the linear minimization oracle becomes
Lipschitz over the unit sphere.
\begin{lemma} \label{lem:lmo-lipschitz}
  Let $\C \subseteq \mathbb{R}^n$ be nonempty, compact, and $\alpha$-strongly convex for some $\alpha > 0$. Recall the definition of the linear minimization oracle
  \[
    \beta(u) := \argmin_{s\in\C}\langle{u, s}\rangle,\quad u \in \mathbb{R}^n.
  \]
  Then,
  \begin{enumerate}[label=(\roman*)]
      \item For every $u \in \mathbb{R}^n\setminus\{0\}$, the minimizer $\beta(u)$ is unique.
      \item The function $\beta(u)$ is $(1/\alpha)$-Lipschitz on the unit sphere; that is, for all unit vectors $u,v \in \mathbb{R}^n$
      \begin{equation} \label{eq:lmo-lip}
          \lVert \beta(u) - \beta(v)\rVert \leq \frac{1}{\alpha}\lVert u - v\rVert.
      \end{equation}
  \end{enumerate}
\end{lemma}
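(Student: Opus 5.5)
The plan rests on one quadratic growth inequality for strongly convex sets. For any $u\neq 0$ and any $s\in\beta(u)$, we have
\[
  \langle u, z - s\rangle \geq \frac{\alpha}{4}\lVert u\rVert\,\lVert z-s\rVert^2\qquad\forall z\in\C.
\]
Once this is in hand, both parts follow from short algebra.

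\textbf{Deriving the growth inequality.} Fix $z\in\C$ and $s\in\beta(u)$. Apply the definition of $\alpha$-strong convexity with $t=1/2$. The ball centered at $m=(s+z)/2$ with radius $r=\frac{\alpha}{8}\lVert z-s\rVert^2$ lies in $\C$. In particular, the point $m - r\,u/\lVert u\rVert$ lies in $\C$. Optimality of $s$ then gives
\[
  \langle u,s\rangle \leq \langle u,m\rangle - r\lVert u\rVert = \tfrac12\langle u,s\rangle+\tfrac12\langle u,z\rangle - \tfrac{\alpha}{8}\lVert u\rVert\lVert z-s\rVert^2 .
\]
Rearranging yields the claimed inequality with constant $\alpha/4$. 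This constant is not sharp for the $1/\alpha$ claim, and I return to this below.

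\textbf{Part (i).} If $s,s'\in\beta(u)$, take $z=s'$. The left side of the growth inequality is then $0$, so $\lVert s'-s\rVert^2\leq 0$, and the minimizer is unique.

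\textbf{Part (ii).} Let $u,v$ be unit vectors, and write $a=\beta(u)$ and $b=\beta(v)$. Apply the growth inequality twice:
\[
  \langle u,b-a\rangle\geq c\lVert a-b\rVert^2,\qquad \langle v,a-b\rangle\geq c\lVert a-b\rVert^2,
\]
where $c$ is the growth constant. Adding the two inequalities gives
\[
  \langle u-v,b-a\rangle\geq 2c\lVert a-b\rVert^2 .
\]
Cauchy–Schwarz then gives $\lVert a-b\rVert\leq \frac{1}{2c}\lVert u-v\rVert$. To obtain exactly $1/\alpha$ we need $c=\alpha/2$.

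\textbf{Main obstacle: the constant.} The crude choice $t=1/2$ gives $c=\alpha/4$, hence only $2/\alpha$. I would sharpen the estimate by using a general $t\in(0,1)$. The point
\[
  (1-t)s+tz - \tfrac{\alpha}{2}t(1-t)\lVert z-s\rVert^2\,u/\lVert u\rVert
\]
lies in $\C$, so
\[
  t\langle u,z-s\rangle\geq \tfrac{\alpha}{2}t(1-t)\lVert u\rVert\lVert z-s\rVert^2 .
\]
Divide by $t$ and let $t\to 0^+$. This gives $\langle u,z-s\rangle\geq \frac{\alpha}{2}\lVert u\rVert\lVert z-s\rVert^2$, which is exactly $c=\alpha/2$. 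Substituting this into the argument for part (ii) yields the stated $(1/\alpha)$-Lipschitz bound.
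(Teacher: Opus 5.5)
Your proposal is correct and follows essentially the same route as the paper: the midpoint-ball argument for uniqueness, and for the Lipschitz bound the general-$t$ point $m_t - r_t u$ with $t\downarrow 0$ to obtain $\langle u, \beta(v)-\beta(u)\rangle \geq \frac{\alpha}{2}\lVert \beta(u)-\beta(v)\rVert^2$, followed by symmetrizing, adding, and applying Cauchy--Schwarz. The only difference is cosmetic: you state the $t\downarrow 0$ estimate as a growth inequality for every $z\in\C$ and derive (i) from it, whereas the paper proves (i) directly by contradiction and derives the estimate only for $z=\beta(v)$.
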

\begin{proof}
  First, since $\C$ is nonempty and compact, $\beta(u)$ must exist. Now suppose $u \neq 0$ and there exist distinct $s_1,s_2 \in \C$ with
  $\langle{u,s_1}\rangle = \langle{u,s_2}\rangle = k \in \mathbb{R}$. Define $m = (s_1 + s_2)/2$. By $\alpha$-strong convexity of $\C$, for $t=1/2$ we have
  \[
    B(m,\rho) \subseteq \C,
  \]
  where $\rho = \frac{\alpha}{8}\lVert s_1 - s_2\rVert^2$, which is positive by the assumption that $s_1 \neq s_2$. Take $w = u/\lVert u\rVert$ and
  $z = m - \rho w$. We must have $z \in \C$, while
  \begin{align*}
    \langle{u,z}\rangle &= \langle{u,m}\rangle - \rho\langle{u,w}\rangle\\
                        &= k - \rho\lVert u\rVert\\
                        &< k.
  \end{align*}
  But we have now found a $z\in \C$ which contradicts the minimality of the claimed minimizers. It can only be that $\rho = 0$, which implies $s_1 = s_2$, completing the proof of (i).

  To prove (ii), fix unit vectors $w_1, w_2 \in \mathbb{R}^n$ and let $s_1 := \beta(w_1), s_2 := \beta(w_2)$, $d := \lVert s_1 - s_2\rVert$.
  If $d = 0$ the proof is trivial, so suppose $d > 0$.
  For any $t \in [0,1]$, strong convexity of $\C$ implies $B(m_t, r_t) \subseteq \C$, where
  $m_t = (1-t)s_1 + ts_2$ and $r_t = \frac{\alpha}{2}t(1-t)d^2$. Take $z_t := m_t - r_t w_1$, which must lie in $\C$.
  By optimality of $s_1$ for $\min_{s\in \C}\langle{w_1,s}\rangle$,
  \[
    \langle{w_1,s_1}\rangle \leq \langle{w_1, z_t}\rangle = (1-t)\langle{w_1,s_1}\rangle + t\langle{w_1,s_2}\rangle - r_t,
  \]
  which after rearranging gives
  \[
    t\langle{w_1, s_2 - s_1}\rangle \geq r_t = \frac{\alpha}{2}t(1-t)d^2.
  \]
  So for all $t \in (0,1)$,
  \[
    \langle{w_1,s_2 - s_1}\rangle \geq \frac{\alpha}{2}(1-t)d^2.
  \]
  Taking the limit as $t \downarrow 0$, 
  \begin{equation} \label{eq:int-support-l-bound-1}
    \langle{w_1,s_2 - s_1}\rangle \geq \frac{\alpha}{2}d^2.
  \end{equation}
  Swapping $w_1$ and $w_2$ and $s_1$ and $s_2$ in the above working gives the symmetric form
  \begin{equation} \label{eq:int-support-l-bound-2}
    \langle{w_2,s_1 - s_2}\rangle \geq \frac{\alpha}{2}d^2.
  \end{equation}
  Adding \eqref{eq:int-support-l-bound-1} and \eqref{eq:int-support-l-bound-2} we get
  \[
    \langle{w_1 - w_2, s_2 - s_1}\rangle \geq \alpha d^2,
  \]
  and by Cauchy-Schwarz
  \[
    \alpha d^2 \leq \lVert w_1 - w_2\rVert d,
  \]
  which gives the desired inequality
  \[
    \lVert \beta(w_1) - \beta(w_2)\rVert \leq \frac{1}{\alpha}\lVert w_1 - w_2\rVert.
  \]
\end{proof}
\begin{corollary} \label{cor:lmo-lip}
  Suppose that $\C$ is nonempty, compact, and $\alpha$-strongly convex for some $\alpha > 0$. Then for every $u,v \in \R^n\setminus\{0\}$, and every
  $s(u) \in \beta(u)$, $s(v) \in \beta(v)$, one has
  \[
    \|s(u)-s(v)\| \leq \frac{2}{\alpha\,\min(\|u\|,\|v\|)}\,\|u-v\|.
  \]
\end{corollary}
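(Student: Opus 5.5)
The plan is to reduce to the unit-sphere Lipschitz bound of Lemma~\ref{lem:lmo-lipschitz}(ii) by normalizing. The LMO is invariant under positive scaling: for $u \neq 0$ we have $\beta(u) = \beta(u/\|u\|)$, since minimizing $\langle u, s\rangle$ over $\C$ is the same as minimizing $\langle u/\|u\|, s\rangle$. By Lemma~\ref{lem:lmo-lipschitz}(i) these minimizers are unique, so $s(u) = \beta(\hat u)$ and $s(v) = \beta(\hat v)$, where $\hat u := u/\|u\|$ and $\hat v := v/\|v\|$. Applying \eqref{eq:lmo-lip} then gives
\[
  \|s(u)-s(v)\| \leq \frac{1}{\alpha}\,\|\hat u - \hat v\|.
\]

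It remains to bound the distance between the normalized vectors by
\[
  \|\hat u-\hat v\| \leq \frac{2\|u-v\|}{\min(\|u\|,\|v\|)}.
\]
By symmetry, assume $\|u\| \geq \|v\|$, so that the minimum is $\|v\|$. Write
\[
  \frac{u}{\|u\|}-\frac{v}{\|v\|} = \frac{u-v}{\|v\|} + u\Big(\frac{1}{\|u\|}-\frac{1}{\|v\|}\Big).
\]
The first term has norm $\|u-v\|/\|v\|$. The second term has norm
\[
  \|u\|\cdot\frac{\big|\|v\|-\|u\|\big|}{\|u\|\,\|v\|} = \frac{\big|\|u\|-\|v\|\big|}{\|v\|} \leq \frac{\|u-v\|}{\|v\|},
\]
using the reverse triangle inequality. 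Summing the two contributions gives the factor $2$, and combining with the previous display yields the claimed bound.

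The argument is routine. The only points needing care are the scale invariance of $\operatorname{argmin}$, which justifies replacing $u$ and $v$ by unit vectors, and uniqueness, which ensures that the arbitrary selections $s(u)$ and $s(v)$ coincide with $\beta(\hat u)$ and $\beta(\hat v)$. The step most likely to go wrong is the normalization estimate, because the two vectors must be divided by the correct norms so that the minimum appears in the denominator. That is why the case $\|u\| \geq \|v\|$ is fixed first and everything is divided by $\|v\|$.
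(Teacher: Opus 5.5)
Your proof is correct and follows the same route as the paper. You normalize via the scale invariance of $\argmin$, apply the unit-sphere Lipschitz bound of Lemma~\ref{lem:lmo-lipschitz}(ii), and then bound $\|u/\|u\| - v/\|v\|\|$ by $2\|u-v\|/\min(\|u\|,\|v\|)$; the paper merely asserts that last inequality and leaves implicit the uniqueness from part (i), whereas you verify the inequality (decomposition plus reverse triangle inequality) and invoke part (i) explicitly to handle arbitrary selections.
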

\begin{proof}
  Let $u,v\neq 0$. Since minimizing $\langle u,\cdot\rangle$ is the same as minimizing
  $\langle u/\|u\|,\cdot\rangle$, we may write
  \[
    s(u)=s\!\left(\frac{u}{\|u\|}\right),\quad s(v)=s\!\left(\frac{v}{\|v\|}\right).
  \]
  Hence
  \[
    \|s(u)-s(v)\| \leq \frac{1}{\alpha} \left\|\frac{u}{\|u\|}-\frac{v}{\|v\|}\right\|.
  \]
  Using the bound
  \[
    \left\|\frac{u}{\|u\|}-\frac{v}{\|v\|}\right\| \leq \frac{2}{\min(\|u\|,\|v\|)}\,\|u-v\|,
  \]
  the result follows.
\end{proof}

In their PhD thesis \cite{Hammond1984}, Hammond proved convergence of generalized fictitious play to a solution of \eqref{prob:vip}
under the assumption that $F$ is $C^1$ and monotone, $\C$ is compact and strongly convex, and additionally no point $x \in \C$
satisfies $F(x) = 0$. No rate of convergence is proven in \cite{Hammond1984} for this case. Hammond notes that the assumptions
of strong convexity and $F(x) \neq 0$ for all $x \in \C$ are too restrictive. In Section~\ref{sec:hammond} we showed that neither
of these assumptions are necessary for convergence. It remains unclear how to prove a rate of convergence without the assumption
that $\C$ is strongly convex. However, the assumption that $F$ does not vanish on $\C$ is common in the literature
when proving convergence rates over strongly convex sets. For example, in \cite{Gidel2017}, the authors assume
$\min\left(\|\nabla_x \L(z)\|_{\X^*}, \|\nabla_y \L(z)\|_{\Y^*}\right) \geq \delta > 0$ for all $z \in \X\times\Y$ in order to
obtain global Lipschitz continuity of the LMO over $\X \times \Y$. The resulting linear convergence guarantee of \cite[Theorem~4]{Gidel2017}
depends on $\delta$: as $\delta$ decreases, the Lipschitz modulus worsens and the contraction factor in the rate deteriorates. Hence,
although the rate remains linear in form, the associated complexity bound can become arbitrarily poor when $\delta$ is small.

In the next two theorems, we prove rates of convergence for \eqref{alg:gfp} over strongly convex sets without assuming $F(x)$ does not vanish over $\C$.
The analysis hinges on the observation that when the function value is smaller than some quantity, we can obtain a bound on the Frank-Wolfe gap
in terms of this quantity. This analysis relies on showing $\lVert s_{k+1} - s_k\rVert$ is decreasing at the rate $\O(\gamma_{k+1})$, which we get from
the Lipschitz continuity of the LMO and the operator $F$. In settings where $\C$ is not uniformly smooth,
for example when $\C$ is a polytope, $\lVert s_{k+1} - s_k\rVert$ is not necessarily decreasing and thus another proof technique is necessary.
\begin{figure}[h]
\centering
\begin{minipage}[t]{0.48\textwidth}
\centering
\begin{tikzpicture}[scale=1.0,>=Stealth,line cap=round,line join=round, font=\footnotesize]
  \fill[gray!6] (0,0) rectangle (3.4,3.4);
  \draw[thick] (0,0) rectangle (3.4,3.4);

  \coordinate (x1) at (1.6,1.4);
  \coordinate (x2) at (1.8,1.4);
  \coordinate (s1) at (0,3.4);
  \coordinate (s2) at (3.4,3.4);

  \fill (x1) circle (1.2pt) node[left=1pt] {$x_1$};
  \fill (x2) circle (1.2pt) node[right=1pt] {$x_2$};
  \fill (s1) circle (1.2pt) node[above] {$s_1$};
  \fill (s2) circle (1.2pt) node[above] {$s_2$};

  \draw[->] (x1) -- ++(-0.15,2.0) node[midway, left] {$-F(x_1)$};
  \draw[->] (x2) -- ++(0.15,2.0) node[midway, right] {$-F(x_2)$};


  \draw[decorate,decoration={brace,mirror,amplitude=4pt}] (3.4,3.88) -- (0,3.88)
    node[midway,above=6pt] {$\diam(\C)$};
\end{tikzpicture}

\vspace{0.8ex}
\small (a) $\C$ is a box, where the distance between any two vertices is $\diam(\C)$.
\end{minipage}
\hfill
\begin{minipage}[t]{0.48\textwidth}
\centering
\begin{tikzpicture}[scale=1.0,>=Stealth,line cap=round,line join=round, font=\footnotesize]
  \coordinate (c) at (1.9,1.7);
  \def\r{1.8}
  \fill[gray!6] (c) circle[radius=\r];
  \draw[thick] (c) circle[radius=\r];

  \coordinate (x1) at (1.8,1.5);
  \coordinate (x2) at (2.0,1.5);
  \coordinate (s1) at (1.6,3.5);
  \coordinate (s2) at (2.2,3.5);

  \fill (x1) circle (1.2pt) node[left=1pt] {$x_1$};
  \fill (x2) circle (1.2pt) node[right=1pt] {$x_2$};
  \fill (s1) circle (1.2pt) node[above] {$s_1$};
  \fill (s2) circle (1.2pt) node[above] {$s_2$};

  \draw[->] (x1) -- (s1) node[midway, left] {$-F(x_1)$};
  \draw[->] (x2) -- (s2) node[midway, right] {$-F(x_2)$};


  \draw[decorate,decoration={brace,mirror,amplitude=4pt}] (2.2,3.88) -- (1.6,3.88)
    node[midway,above=2pt] {$\O\big(\frac{1}{m}\|F(x_1) - F(x_2)\|\big)$};
\end{tikzpicture}

\vspace{0.8ex}
\small $\C$ is a ball, which is an example of a strongly-convex set.
\end{minipage}
\caption{Geometry of the linear minimization oracle. On a polytope, crossing the outward normal direction of
a face can cause an $\mathcal{O}(\diam(\C))$ jump in the LMO. For a strongly convex set, the LMO is Lipschitz
on the unit sphere. With $F$ Lipschitz, this allows a bound in terms of the difference between $x_1$ and $x_2$.
Here $m := \min\{\|F(x_1)\|, \|F(x_2)\|\}$.}
\label{fig:lmo-geometry-revised}
\end{figure}

\begin{theorem}\label{thm:mono-rate}
  In addition to the standing assumptions of Section~\ref{sec:hammond}, suppose that $\C$ is $\alpha$-strongly convex.
  Then $F$ is Lipschitz on $\C$ with constant $L > 0$, and for all $k \geq 1$,
  \[
    V(x_{k+1}) \leq \max\big\{(1-\gamma_{k+1})V(x_k) + B\gamma_{k+1}^{3/2}, C\sqrt{\gamma_{k+1}}\big\},
  \]
  where $B = \frac{2L^2\diam(\C)^2}{\alpha}$ and $C = \diam(\C)(1 + L\diam(\C))$. In particular, if $\gamma_k = 1/k$, then
  \[
    V(x_k) \leq \frac{A}{\sqrt{k}},
  \]
  where $A = \max\{V(x_1), 2B, C\}$.
\end{theorem}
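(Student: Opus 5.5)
The plan is to derive a one-step recursion for the gap $g_k := V(x_k) = \langle F(x_k), x_k - s_k\rangle$ in the style of classical Frank-Wolfe analyses. I would then control the single error term using the Lipschitz property of the LMO (Corollary~\ref{cor:lmo-lip}), with a case split on the size of $\|F\|$.

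\textbf{Step 1: Lipschitz constant of $F$.} Since $F$ is $C^1$ on the compact convex set $\C$, the mean value inequality gives that $F$ is Lipschitz on $\C$ with $L := \max_{x\in\C}\|DF(x)\|$; if this is $0$, replace it by any positive constant. Throughout, write $D := \diam(\C)$ and $\gamma := \gamma_{k+1}$.

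\textbf{Step 2: one-step inequality.} From $x_{k+1} = x_k + \gamma(s_k - x_k)$ we get
\[
  x_{k+1} - s_{k+1} = (1-\gamma)(x_k - s_k) + (s_k - s_{k+1}).
\]
Hence
\[
  V(x_{k+1}) = (1-\gamma)\langle F(x_{k+1}), x_k - s_k\rangle + \langle F(x_{k+1}), s_k - s_{k+1}\rangle .
\]
I would bound the two terms separately.
\begin{itemize}
  \item[(i)] In the first term, split $F(x_{k+1}) = F(x_k) + (F(x_{k+1}) - F(x_k))$. Monotonicity together with $x_{k+1} - x_k = \gamma(s_k - x_k)$ gives $\langle F(x_{k+1}) - F(x_k), x_k - s_k\rangle \le 0$. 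So the first term is at most $(1-\gamma)V(x_k)$.
  \item[(ii)] In the second term, the same split applies. Optimality of $s_k$ for $F(x_k)$ gives $\langle F(x_k), s_k - s_{k+1}\rangle \le 0$. Cauchy-Schwarz with $\|x_{k+1} - x_k\| \le \gamma D$ bounds the remainder by $L\gamma D\,\|s_k - s_{k+1}\|$.
\end{itemize}
Combining,
\[
  V(x_{k+1}) \le (1-\gamma)V(x_k) + L\gamma D\,\|s_k - s_{k+1}\| .
\]

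\textbf{Step 3: case split, the main obstacle.} The difficulty is that Corollary~\ref{cor:lmo-lip} degenerates when $F$ is small or vanishes. I would split on $m := \min(\|F(x_k)\|, \|F(x_{k+1})\|)$ against the threshold $\sqrt{\gamma}$.
\begin{itemize}
  \item[(a)] If $m \ge \sqrt\gamma$, both operator values are nonzero. Corollary~\ref{cor:lmo-lip} and Lipschitzness of $F$ give $\|s_k - s_{k+1}\| \le 2L\gamma D/(\alpha\sqrt\gamma)$. The error term is then at most $B\gamma^{3/2}$, so the first branch of the max holds.
  \item[(b)] If $\|F(x_{k+1})\| < \sqrt\gamma$, then directly $V(x_{k+1}) \le \|F(x_{k+1})\|\,D < D\sqrt\gamma \le C\sqrt\gamma$.
  \item[(c)] If $\|F(x_k)\| < \sqrt\gamma$, then $\|F(x_{k+1})\| < \sqrt\gamma + L\gamma D \le \sqrt\gamma(1 + LD)$, using $\gamma \le \sqrt\gamma$ for $\gamma \le 1$. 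Hence $V(x_{k+1}) \le D(1+LD)\sqrt\gamma = C\sqrt\gamma$.
\end{itemize}
These cases cover all possibilities, including $F = 0$, and together give the stated max bound.

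\textbf{Step 4: rate for $\gamma_k = 1/k$.} I would prove $V(x_k) \le A/\sqrt{k}$ by induction on $k$; the base case holds since $A \ge V(x_1)$. The $C\sqrt{\gamma_{k+1}}$ branch is immediate because $C \le A$. For the other branch it suffices that
\[
  \frac{k}{k+1}\cdot\frac{A}{\sqrt k} + \frac{B}{(k+1)^{3/2}} \le \frac{A}{\sqrt{k+1}} .
\]
After multiplying by $\sqrt{k+1}$, this becomes $A\sqrt{k/(k+1)} + B/(k+1) \le A$. Since $1 - \sqrt{k/(k+1)} \ge \frac{1}{2(k+1)}$, this holds whenever $A \ge 2B$.
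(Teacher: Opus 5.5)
Your proposal is correct and follows essentially the same route as the paper. It uses the same one-step identity from $x_{k+1}-s_k=(1-\gamma_{k+1})(x_k-s_k)$, and monotonicity plus LMO optimality to reduce the error to $\langle F(x_{k+1})-F(x_k), s_k-s_{k+1}\rangle$; it then applies Corollary~\ref{cor:lmo-lip} with the threshold $\sqrt{\gamma_{k+1}}$ on $m_k$ (your cases (b) and (c) are exactly the paper's two subcases), and finishes with the same induction using $1-\sqrt{k/(k+1)}\ge \frac{1}{2(k+1)}$ and $A\ge 2B$.
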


\begin{proof}
  Since $F$ is $C^1$ on the compact set $\C$, it is Lipschitz on $\C$. We denote its Lipschitz constant by $L > 0$.
  Observe that $V(x_k) = \langle F(x_k), x_k - s_k\rangle$.

  Since $x_{k+1}=x_k+\gamma_{k+1}(s_k-x_k)$,
  we have
  \[
    x_{k+1}-s_k = (1-\gamma_{k+1})(x_k-s_k).
  \]
  Therefore,
  \begin{align*}
    V(x_{k+1}) &= \langle F(x_{k+1}), x_{k+1}-s_{k+1}\rangle \\
               &= \langle F(x_{k+1}), x_{k+1}-s_k\rangle + \langle F(x_{k+1}), s_k - s_{k+1}\rangle \\
               &= (1-\gamma_{k+1})\langle F(x_{k+1}), x_k - s_k\rangle + \langle F(x_{k+1}), s_k-s_{k+1}\rangle \\
               &= (1-\gamma_{k+1})V(x_k) + (1-\gamma_{k+1})\langle F(x_{k+1}) - F(x_k), x_k - s_k\rangle + \langle F(x_{k+1}), s_k-s_{k+1}\rangle.
  \end{align*}
  Since $x_{k+1}-x_k=\gamma_{k+1}(s_k-x_k)$ and $F$ is monotone,
  \[
    \langle F(x_{k+1}) - F(x_k), s_k - x_k\rangle = \frac{1}{\gamma_{k+1}} \langle F(x_{k+1})-F(x_k), x_{k+1} - x_k\rangle \geq 0.
  \]
  Hence
  \[
    (1-\gamma_{k+1})\langle F(x_{k+1}) - F(x_k), x_k - s_k\rangle \leq 0.
  \]
  Also, by optimality of $s_k \in \beta(F(x_k))$,
  \[
    \langle F(x_k), s_k-s_{k+1}\rangle \leq 0,
  \]
  so
  \begin{align*}
    \langle F(x_{k+1}), s_k-s_{k+1}\rangle &= \langle F(x_{k+1}) - F(x_k), s_k-s_{k+1}\rangle + \langle F(x_k), s_k-s_{k+1}\rangle\\
                                           &\leq \langle F(x_{k+1}) - F(x_k), s_k-s_{k+1}\rangle.
  \end{align*}
  We conclude that
  \[
    V(x_{k+1}) \leq (1-\gamma_{k+1})V(x_k) + \langle F(x_{k+1}) - F(x_k), s_k - s_{k+1}\rangle.
  \]
  Therefore,
  \begin{equation} \label{eqn:recursion-sc-mono}
    V(x_{k+1}) \leq (1-\gamma_{k+1})V(x_k) + \|F(x_{k+1}) - F(x_k)\|\cdot\|s_k-s_{k+1}\|.
  \end{equation}

  Next, define
  \[
    m_k := \min\{\|F(x_k)\|,\|F(x_{k+1})\|\}.
  \]
  If $m_k>0$, Corollary~\ref{cor:lmo-lip} gives
  \[
    \|s_k-s_{k+1}\| \leq \frac{2}{\alpha\,m_k}\,\|F(x_{k+1}) - F(x_k)\|.
  \]
  Also, since $F$ is $L$-Lipschitz on $\C$,
  \[
    \|F(x_{k+1}) - F(x_k)\| \leq L\|x_{k+1}-x_k\| \leq L\gamma_{k+1}\|s_k-x_k\| \leq L\diam(\C)\,\gamma_{k+1}.
  \]
  Substituting into \eqref{eqn:recursion-sc-mono}, we obtain
  \begin{equation}\label{eqn:recursion-sc-mono-2}
    V(x_{k+1}) \leq (1-\gamma_{k+1})V(x_k) + \frac{2L^2\diam(\C)^2}{\alpha\,m_k}\,\gamma_{k+1}^2.
  \end{equation}
  Set $\theta_k := \sqrt{\gamma_{k+1}}$.
  We now split into two cases. In the first case, suppose $m_k > \theta_k$.
  Then \eqref{eqn:recursion-sc-mono-2} yields
  \[
    V(x_{k+1}) \leq (1-\gamma_{k+1})V(x_k) + \frac{2L^2\diam(\C)^2}{\alpha}\,\gamma_{k+1}^{3/2}.
  \]
  Set
  \[
    B := \frac{2L^2\diam(\C)^2}{\alpha}.
  \]
  Then
  \begin{equation}\label{eq:case1-recursion}
    V(x_{k+1}) \leq (1-\gamma_{k+1})V(x_k) + B\,\gamma_{k+1}^{3/2}.
  \end{equation}
  In the second case, suppose $m_k\leq \theta_k$. If $\|F(x_{k+1})\|\leq \theta_k$, then trivially
  \[
    \|F(x_{k+1})\|\leq \theta_k.
  \]
  If instead $\|F(x_k)\|\leq \theta_k$, then
  \[
    \|F(x_{k+1})\| \leq \|F(x_{k+1})-F(x_k)\| + \|F(x_k)\| \leq L\diam(\C)\,\gamma_{k+1}+\theta_k.
  \]
  Thus in either subcase,
  \[
    \|F(x_{k+1})\| \leq L\diam(\C)\,\gamma_{k+1}+\theta_k.
  \]
  Since
  \[
    V(x_{k+1}) = \langle F(x_{k+1}), x_{k+1} - s_{k+1}\rangle \leq \diam(\C)\|F(x_{k+1})\|,
  \]
  we get
  \[
    V(x_{k+1}) \leq \diam(\C)\big(L\diam(\C)\gamma_{k+1} + \sqrt{\gamma_{k+1}}\big).
  \]
  Because $\gamma_{k+1}\leq \sqrt{\gamma_{k+1}}$, it follows that
  \begin{equation}\label{eq:case2-recursion}
    V(x_{k+1}) \leq \diam(\C)(1 + L\diam(\C))\sqrt{\gamma_{k+1}}.
  \end{equation}
  Set
  \[
    C := \diam(\C)(1 + L\diam(\C)).
  \]
  We now prove by induction that
  \[
    V(x_k) \leq \frac{A}{\sqrt{k}},
    \qquad \forall k\geq 1,
  \]
  where
  \[
    A := \max\{V(x_1),2B,C\}.
  \]
  The case $k=1$ is immediate, since $V(x_1) = V(x_1)/\sqrt{1} \leq A/\sqrt{k}$.
  Assume now that $V(x_k) \leq A/\sqrt{k}$ for some $k\geq 1$.

  If the case where $m_k > \theta_k$ holds, then using $\gamma_{k+1}=1/(k+1)$ and \eqref{eq:case1-recursion},
  \[
    V(x_{k+1}) \leq \frac{k}{k+1}\cdot\frac{A}{\sqrt{k}} + \frac{B}{(k+1)^{3/2}}.
  \]
  Also, $\sqrt{k+1}/(\sqrt{k+1} + \sqrt{k}) \geq 1/2$ and $A \geq 2B$,
  \begin{align*}
    \frac{A}{\sqrt{k+1}} - \frac{k}{k+1}\cdot\frac{A}{\sqrt{k}} &= \frac{A}{(k+1)^{3/2}}\left(\frac{\sqrt{k+1}}{\sqrt{k+1}+\sqrt{k}}\right) \\
                                                                     &\geq\frac{A}{2(k+1)^{3/2}}\\
                                                                     &\geq\frac{B}{(k+1)^{3/2}},
  \end{align*}
  Hence, in this case
  \[
    V(x_{k+1}) \leq \frac{A}{\sqrt{k+1}}.
  \]
  If the case where $m_k \leq \theta_k$ holds, then by \eqref{eq:case2-recursion} and the fact that $A \geq C$,
  \[
    V(x_{k+1}) \leq C\sqrt{\gamma_{k+1}} \leq \frac{C}{\sqrt{k+1}} \leq \frac{A}{\sqrt{k+1}}.
  \]
  This completes the induction and proves the theorem.
\end{proof}

When we make the stronger assumption that $F$ is cocoercive, we are able to obtain faster convergence rates.
Note that a $\beta$ cocoercive operator for some $\beta > 0$ is $\frac{1}{\beta}$-Lipschitz continuous.
\begin{theorem}
  In addition to the standing assumptions of Section~\ref{sec:hammond}, suppose that $F$ is $\beta$-cocoercive, with $\beta > 0$ and $\C$ is $\alpha$-strongly convex.
  Assume there exists some $\bar{\gamma} > 0$ such that $1-\gamma_{k+1} \geq \bar{\gamma}$ for all $k \geq 1$.
  Then for all $k \geq 1$,
  \[
    V(x_{k+1}) \leq \max\big\{(1-\gamma_{k+1})V(x_k), B\gamma_{k+1}\big\},
  \]
  where
  \[
    B = \diam(\C)\bigg(\frac{2}{\alpha\beta\bar{\gamma}} + \frac{\diam(\C)}{\beta}\bigg),
  \]
  In particular, if $\gamma_k = 1/k$, then $\bar{\gamma} = \frac{1}{2}$, and
  \[
    V(x_k) \leq \frac{A}{k},
  \]
  where $A = \max\{V(x_1), B\}$.
\end{theorem}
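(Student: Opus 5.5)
The plan is to rerun the one-step analysis from the proof of Theorem~\ref{thm:mono-rate}, with one change. There the monotone cross term $(1-\gamma_{k+1})\langle F(x_{k+1})-F(x_k), x_k-s_k\rangle$ was simply dropped as nonpositive. Here I will keep it and use cocoercivity to make it strictly negative and quadratic in $\|F(x_{k+1})-F(x_k)\|$. That negative quadratic should absorb the LMO error term, so there is no $\gamma_{k+1}^{3/2}$ residual and the faster $\O(1/k)$ rate follows.

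\emph{One-step inequality.} Write $\Delta F := F(x_{k+1})-F(x_k)$ and $\gamma := \gamma_{k+1}$. The identity in the proof of Theorem~\ref{thm:mono-rate} gives
\[
V(x_{k+1}) = (1-\gamma)V(x_k) - \tfrac{1-\gamma}{\gamma}\langle \Delta F, x_{k+1}-x_k\rangle + \langle F(x_{k+1}), s_k - s_{k+1}\rangle .
\]
Apply $\beta$-cocoercivity to the middle term, bound it using $1-\gamma\ge\bar\gamma$, and handle the last term via optimality of $s_k$ exactly as before. This yields
\[
V(x_{k+1}) \le (1-\gamma)V(x_k) - \tfrac{\beta\bar\gamma}{\gamma}\|\Delta F\|^2 + \|\Delta F\|\,\|s_k-s_{k+1}\| .
\]

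\emph{Case split.} Set $m_k := \min\{\|F(x_k)\|,\|F(x_{k+1})\|\}$ and use the threshold $\theta_k := \frac{2\gamma}{\alpha\beta\bar\gamma}$, chosen so that the two error terms cancel.
\begin{itemize}
\item[(1)] If $m_k > \theta_k$, Corollary~\ref{cor:lmo-lip} gives $\|s_k-s_{k+1}\|\le \frac{2}{\alpha m_k}\|\Delta F\|$. The error terms are then at most $\|\Delta F\|^2\bigl(\frac{2}{\alpha m_k}-\frac{\beta\bar\gamma}{\gamma}\bigr)\le 0$, so $V(x_{k+1})\le(1-\gamma)V(x_k)$.
\item[(2)] If $m_k\le\theta_k$, argue as in case two of Theorem~\ref{thm:mono-rate}, using that $F$ is $\frac1\beta$-Lipschitz. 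This gives $\|F(x_{k+1})\|\le \theta_k + \frac{\diam(\C)}{\beta}\gamma$. Since $V(x_{k+1})\le \diam(\C)\|F(x_{k+1})\|$, we get $V(x_{k+1})\le \diam(\C)\bigl(\frac{2}{\alpha\beta\bar\gamma}+\frac{\diam(\C)}{\beta}\bigr)\gamma = B\gamma$.
\end{itemize}
Together the two cases give the stated max-recursion.

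\emph{Rate for $\gamma_k = 1/k$.} For $k\ge1$ we have $1-\frac{1}{k+1}\ge\frac12$, so $\bar\gamma = \frac12$. Induct on $V(x_k)\le A/k$ with $A=\max\{V(x_1),B\}$. In case (1), $\frac{k}{k+1}\cdot\frac{A}{k}=\frac{A}{k+1}$. In case (2), $V(x_{k+1})\le \frac{B}{k+1}\le\frac{A}{k+1}$.

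The main obstacle is choosing the threshold $\theta_k$ so that the LMO error is cancelled exactly rather than merely bounded. A threshold of order $\gamma$, not $\sqrt\gamma$, is what makes case (2) still yield an $\O(\gamma)$ bound. I should also check that the factor $\frac1\gamma$ really appears after rewriting $x_k-s_k = -(x_{k+1}-x_k)/\gamma$, since that factor is what makes the cancellation possible.
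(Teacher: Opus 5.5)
Your proposal is correct and follows essentially the same route as the paper: the same one-step identity, cocoercivity to turn the cross term into $-\frac{\beta(1-\gamma_{k+1})}{\gamma_{k+1}}\|F(x_{k+1})-F(x_k)\|^2$, optimality of $s_k$ together with Corollary~\ref{cor:lmo-lip} for the LMO term, and the same two-case split and induction. The only cosmetic difference is that you build $\bar\gamma$ into the threshold from the start, $\theta_k = \frac{2\gamma_{k+1}}{\alpha\beta\bar\gamma}$, whereas the paper uses $\theta_k = \frac{2\gamma_{k+1}}{\alpha\beta(1-\gamma_{k+1})}$ and replaces $1-\gamma_{k+1}$ by $\bar\gamma$ only at the end; both yield the same constant $B$.
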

\begin{proof}
  As in the proof of Theorem~\ref{thm:mono-rate},
  \[
    V(x_{k+1}) = (1-\gamma_{k+1})V(x_k) + (1-\gamma_{k+1})\langle F(x_{k+1}) - F(x_k), x_k - s_k\rangle + \langle F(x_{k+1}), s_k-s_{k+1}\rangle.
  \]
  The definition of $\beta$-cocoercivity with $(x,y) = (x_{k+1}, x_k)$ gives
  \begin{align} 
    \langle{F(x_{k+1}) - F(x_k), s_k - x_k}\rangle &= \frac{1}{\gamma_{k+1}}\langle{F(x_{k+1}) - F(x_k), x_{k+1} - x_k}\rangle\notag\\
                                                   &\geq \frac{\beta}{\gamma_{k+1}}\lVert F(x_{k+1}) - F(x_k)\rVert^2.\label{eqn:b-coco-bound}
  \end{align}
  As in the proof of Theorem~\ref{thm:mono-rate}, we control the $\langle{F(x_{k+1}), s_k - s_{k+1}}\rangle$ term by observing
  \[
    \langle{F(x_{k+1}), s_k - s_{k+1}}\rangle \leq \langle{F(x_{k+1}) - F(x_k), s_k - s_{k+1}}\rangle.
  \]
  This time, $\beta$-cocoercivity of $F$ gives us that $F$ is $(1/\beta)$-Lipschitz:
  \begin{equation} \label{eqn:DeltaFk-bound}
    \lVert F(x_{k+1}) - F(x_k)\rVert \leq \frac{1}{\beta}\lVert x_{k+1} - x_k\rVert = \frac{\gamma_{k+1}}{\beta}\lVert s_k - x_k \rVert \leq \frac{\gamma_{k+1}}{\beta}\diam(\C).
  \end{equation}
  Now, define
  \[
    m_k := \min\{\|F(x_k)\|,\|F(x_{k+1})\|\}.
  \]
  If $m_k>0$, Corollary~\ref{cor:lmo-lip} gives
  \[
    \|s_k-s_{k+1}\| \leq \frac{2}{\alpha\,m_k}\,\|F(x_{k+1}) - F(x_k)\|.
  \]
  So, in this case we can combine \eqref{eqn:b-coco-bound} and \eqref{eqn:DeltaFk-bound} to obtain the bound
  \begin{equation}\label{eqn:recursion-sc-coco}
    V(x_{k+1}) \leq (1-\gamma_{k+1})V(x_k) + \bigg(\frac{2}{\alpha\, m_k} - \frac{\beta(1-\gamma_{k+1})}{\gamma_{k+1}}\bigg)\lVert F(x_{k+1}) - F(x_k)\rVert^2.
  \end{equation}
  Set $\theta_k := \frac{2\gamma_{k+1}}{\alpha\beta(1-\gamma_{k+1})}$. Like before, we split into two cases.
  First, suppose we are in the case where $m_k > \theta_k$, then
  \[
    \frac{2}{\alpha\, m_k} - \frac{\beta(1-\gamma_{k+1})}{\gamma_{k+1}} < 0,
  \]
  and
  \begin{equation} \label{eqn:mk-gt}
    V(x_{k+1}) \leq (1-\gamma_{k+1})V(x_k).
  \end{equation}
  In the second case, suppose $m_k \leq \theta_k$. If $\lVert F(x_{k+1})\rVert \leq \theta_k$, then trivially
  \[
    \lVert F(x_{k+1})\rVert \leq \theta_k.
  \]
  If instead $\lVert F(x_k)\rVert \leq \theta_k$, we have
  \[
    \lVert F(x_{k+1})\rVert \leq \frac{\gamma_{k+1}}{\beta}\diam(\C) + \theta_k,
  \]
  as in the proof of Theorem~\ref{thm:mono-rate}.
  In both subcases, we can say that $\lVert F(x_{k+1})\rVert \leq \frac{\gamma_{k+1}}{\beta}\diam(\C) + \theta_k$.
  It follows that
  \begin{align}
    V(x_{k+1}) &= \langle{F(x_{k+1}), x_{k+1} - s_{k+1}}\rangle\notag\\
            &\leq \diam(\C)\lVert F(x_{k+1})\rVert\notag\\
            &\leq \diam(\C)\left(\theta_k + \frac{\gamma_{k+1} \diam(\C)}{\beta}\right)\notag\\
            &= \gamma_{k+1}\diam(\C)\left(\frac{2}{\alpha \beta (1-\gamma_{k+1})} + \frac{\diam(\C)}{\beta}\right)\notag\\
            &\leq \gamma_{k+1} \diam(\C)\left(\frac{2}{\alpha\beta\bar{\gamma}} + \frac{\diam(\C)}{\beta}\right)\notag\\
            &= B\gamma_{k+1},\label{eqn:mk-lt}
  \end{align}
  where we set
  \[
    B := \diam(\C)\left(\frac{2}{\alpha\beta\bar{\gamma}} + \frac{\diam(\C)}{\beta}\right),
  \]
  where we recall $1-\gamma_{k+1} \geq \bar{\gamma} > 0$ for all $k \geq 1$. Therefore, when $m_k \leq \theta_k$,
  \[
    V(x_{k+1}) \leq B\gamma_{k+1}.
  \]
  We now prove by induction that
  \[
    V(x_k) \leq \frac{A}{k},\qquad\forall k \geq 1,
  \]
  where
  \[
    A := \max\big\{V(x_1), B\big\}.
  \]
  The case $k=1$ is clear, since $V(x_k) = V(x_1)/1 \leq A/k$. Assume now that $V(x_k) \leq A/k$ for some $k \geq 1$.
  If the case where $m_k > \theta_k$ holds, then using $\gamma_{k+1} = 1/(k+1)$ and \eqref{eqn:mk-gt},
  \[
    V(x_{k+1}) \leq \frac{k}{k+1}V(x_k) = \frac{k}{k+1}\cdot\frac{A}{k} = \frac{A}{k+1}.
  \]
  If the case where $m_k \leq \theta_k$ holds, then by \eqref{eqn:mk-lt} and the fact that $A \geq B$,
  \[
    V(x_{k+1}) \leq B\gamma_{k+1} = \frac{B}{k+1} \leq \frac{A}{k+1}.
  \]
  This completes the induction and proves the claim.
\end{proof}

\section{Conclusion}
We have shown that the Frank-Wolfe algorithm for solving variational inequalities over compact, convex sets under a monotone
$C^1$ operator and vanishing, nonsummable step sizes converges. We also show iterate convergence to the unique solution
in the special case where $F$ is instead assumed to be strongly monotone. The strongly monotone setting generalizes
Hammond's generalized fictitious play, which was conjectured by Hammond to converge to a solution of the corresponding
variational inequality problem. Thus, this result proves Hammond's longstanding conjecture.

For strongly convex sets, we establish rates of convergence with no assumption that $F$ vanishes over the set $\C$. The convergence
rate of Frank-Wolfe for monotone variational inequality problems over sets that aren't uniformly smooth remains an open problem.
An important subcase of the nonsmooth setting is where $\C$ is a polytope.

\bibliography{refs}

\end{document}